\definecolor{indigo}{rgb}{0.29, 0.0, 0.51}  
\theoremstyle{plain}
\newtheorem{theorem}{Theorem}
\newtheorem{corollary}[theorem]{Corollary}
\newtheorem{proposition}[theorem]{Proposition}
\newtheorem{question}[theorem]{Question}
\newtheorem{conjecture}[theorem]{Conjecture}
\theoremstyle{definition}
\theoremstyle{remark}
\newtheorem{remark}[theorem]{Remark}
\numberwithin{theorem}{section}
\newcommand{\dfn}[1]{{\em #1}}        
\newcommand{\R}{\mathbb{R}}           
\newcommand*\bigcdot{\mathpalette\bigcdot@{0.6}}
\newcommand*\bigcdot@[2]{\mathbin{\vcenter{\hbox{\scalebox{#2}{$\m@th#1\bullet$}}}}}
\DeclareMathOperator\tb{tb}                   
\DeclareMathOperator\tbb{\overline {\tb}}     
\DeclareMathOperator\rot{rot}                 
\DeclareMathOperator\self{sl}                 
\begin{document}

\title{Knotted solid tori in contact manifolds} 

\author{John Etnyre}
\address{School of Mathematics \\ Georgia Institute
of Technology \\  Atlanta  \\ Georgia}
\email{etnyre@math.gatech.edu}

\author{Youlin Li}
\address{School of Mathematical Sciences, Shanghai Jiao Tong University, Shanghai, China}
\email{liyoulin@sjtu.edu.cn}

\author{B\"{u}lent Tosun}
\address{Department of Mathematics\\ University of Alabama\\Tuscaloosa\\Alabama}
\email{btosun@ua.edu}


\begin{abstract}
In this note we study solid tori in contact manifolds. Specifically, we study the width of a knot type and give criteria for when it is equal to the maximal Thurston-Bennequin invariant, and when it is larger. We also prove there are many ``non-thickenable" tori in many knot types. These had previously only been observed in $S^3$. 
\end{abstract}

\maketitle

\section{Introduction}

Understanding solid tori in contact manifolds has been an essential tool for our understanding of contact manifolds and Legendrian and transverse knots. For example, they are essential in the study of Legendrian and transverse cables \cite{EtnyreHonda05, EtnyreLafountainTosun12} --- and were responsible for the construction of some of the first transversely non-simple knots --- and more generally satellite knots \cite{EtnyreVertesi18}. More recently, they have appeared in the study of ``Legendrian large cables" \cite{McCullough23, Yasui16} and the classification of tight contact structures on some small Seifert fibered spaces \cite{EtnyreMinTosunPre}. 

The first real study of solid tori in contact manifolds happened in \cite{EtnyreHonda05} where the property of being uniformly thick was defined. A knot type $\mathcal{K}$ is called \dfn{uniformly thick} if any solid torus with core realizing the knot type $\mathcal{K}$ can be contained in another solid torus in the same knot type that is a standard neighborhood of a maximal Thurston-Bennequin Legendrian representative of the knot. If a knot type is uniformly thick, then it is easy to study Legendrian representatives of cables \cite{ChakrabortyEtnyreMin24, EtnyreHonda05} and more general satellites \cite{EtnyreVertesi18} of the knot. It is known that negative torus knots and the figure eight knot are uniformly thick \cite{EtnyreHonda01b,EtnyreHonda05} as well as many iterated cables of torus knots \cite{LaFountain10, LaFountain11}. Positive torus knots are not uniformly thick, and it is due to this that many of their cables are not transversely simple \cite{EtnyreHonda05, EtnyreLafountainTosun12}. We note that the contact structure on a standard neighborhood of a Legendrian knot is universally tight. Thus if $K$ is uniformly thick, the contact structure on any solid tori in this knot type must be universally tight. 

A knot type can fail to be uniformly thick in one of two ways. First, there can be solid tori with convex boundary having dividing slope larger than the maximal Thurston-Bennequin invariant, and the second way is that there can be solid tori with dividing slope less than the maximal Thurston-Bennequin invariant, but that does not thicken to a neighborhood of a maximal Thurston-Bennequin invariant knot. These latter tori are called \dfn{non-thickenable tori}. To study the first failure of uniform thickness, the first author and Honda \cite{EtnyreHonda05} defined the \dfn{width} of a knot type to be \footnote{The slope convention in this paper is the inverse of the one in \cite{EtnyreHonda05} and other main references. The convention in this paper agrees with the convention used by topologists for specifying cables and surgery slopes.}
\[
w(K)=\sup\{\text{slope of } \Gamma_{\partial S}\}
\]
where the supremum is taken over all convex solid tori $S$ in the knot type $K$ and where $\Gamma_T$ denotes the dividing curves on a convex torus $T$. It is easy to see that 
\[
w(K)\in[\tbb(K),\tbb(K)+1].
\]
So in trying to verify a knot is uniformly thick one usually first tries to show that $w(K)=\tbb(K)$. Unfortunately, we do not know that many examples of knots where we can verify this. Our first result gives a large family of such examples. 

\begin{theorem}\label{lspace}
If  $K$ is an $L$-space knot and $\tbb(K)=2g(K)-1$, where $g(K)$ is the minimal genus of a Seifert surface for $K$, 
then the width of $K$ is equal to the maximal Thurston-Bennequin invariant of $K$:
\[
w(K)=\tbb(K).
\]
\end{theorem}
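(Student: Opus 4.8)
The plan is to show that no convex solid torus $S$ in the knot type $K$ can have boundary dividing slope exceeding $\tbb(K)$, which combined with the general bound $w(K) \in [\tbb(K), \tbb(K)+1]$ forces $w(K) = \tbb(K)$. Suppose for contradiction that there is a convex solid torus $S$ whose boundary has dividing slope $s > \tbb(K) = 2g(K)-1$. The strategy is to convert this contact-geometric information into a statement about a tight contact structure on the complementary manifold, and then to derive a contradiction using Heegaard Floer theory via the $L$-space hypothesis.

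First I would pass to the complement. A solid torus $S$ in the knot type $K$ with a dividing slope larger than $2g(K)-1$ on its boundary gives, after removing $\interior(S)$, a tight contact structure on the knot exterior $M = S^3 \setminus \interior(S)$ whose dividing slope on $\partial M$ is large; by doing an appropriate contact $(\pm 1)$-surgery / Legendrian surgery on the Legendrian core, or by filling $S$ with a suitable tight solid torus, I would produce a closed contact manifold $Y$ obtained by a positive contact surgery on a Legendrian representative of $K$, equivalently by a topological surgery on $K$ with coefficient corresponding to the slope $s$. The key point is that since $s > 2g(K)-1$, the surgered manifold is obtained by a \emph{positive} surgery of coefficient at least $2g(K)-1$, and the contact structure on it has nonnegative, in fact positive, "defect" that should make the associated invariant nonvanishing. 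Concretely, the existence of the thick torus should be equivalent to realizing a tight contact structure on the surgered manifold that cannot exist.

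The main tool will be the relationship between $L$-space surgeries and contact/symplectic fillability together with the adjunction-type constraint. Since $K$ is an $L$-space knot, large positive surgeries on $K$ yield $L$-spaces, and for $L$-spaces the Heegaard Floer contact invariant $c(\xi)$ must vanish for any contact structure, because $L$-spaces admit no tight contact structure with nonvanishing contact class arising from such fillings (equivalently, $L$-spaces bound no nontrivial Stein filling in the relevant slope range, by the work relating $\widehat{HF}$ of $L$-spaces to the non-existence of symplectic fillings beyond the canonical ones). I would invoke the theorem of Hedden–Plamenevskaya / Golla type: a contact structure arising from positive contact surgery on $K$ with the slope dictated by the thick torus has nonzero contact invariant precisely when the surgery is a negative-definite fillable one, but the $L$-space condition together with $s > 2g-1$ forces the Seiberg–Witten / Heegaard Floer invariant to vanish, contradicting tightness of the original contact structure on $S$. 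The formula $\tbb(K) = 2g(K)-1$ is exactly the borderline case making the slope-$2g-1$ surgery the critical one.

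The hard part will be making the translation between the dividing slope of the convex torus and the surgery coefficient precise, and correctly identifying which $\spinc$ structure and which surgery formula controls the contact invariant in the boundary case $s = 2g-1$ versus $s > 2g-1$. In particular, I expect the main obstacle to be ruling out the \emph{boundary} slope $s = \tbb(K)+1 = 2g(K)$: one must show not merely that slopes strictly above $2g$ are impossible (which follows from the genus bound, since any convex torus gives a bound $w(K) \le \tbb(K)+1$ automatically) but that the open interval $(\tbb(K), \tbb(K)+1]$ is entirely excluded. This will require using the $L$-space structure to show that the relevant large surgery coefficient lands in the $L$-space range and that the corresponding $d$-invariant or contact class computation obstructs the thickened torus, so the genus bound alone is upgraded to the sharp statement $w(K) = \tbb(K)$.
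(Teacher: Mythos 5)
There is a genuine gap, and it occurs exactly at the point your proposal leaves vague. Your contradiction mechanism rests on the claim that ``for $L$-spaces the Heegaard Floer contact invariant $c(\xi)$ must vanish for any contact structure,'' equivalently that $L$-spaces admit no tight/nontrivially fillable contact structures in the relevant range. This is false: lens spaces, $S^3$, and the Poincar\'e sphere (with its Stein fillable orientation) are all $L$-spaces carrying tight, Stein fillable contact structures with nonvanishing contact class. Indeed, large positive surgeries on $L$-space knots (e.g.\ lens space surgeries on positive torus knots) are $L$-spaces that do carry tight fillable structures, so producing ``a tight contact structure on the surgered manifold'' yields no contradiction whatsoever. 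The Heegaard Floer input that actually works is different and more specific: Ozsv\'ath--Szab\'o's theorem that every symplectic filling of an $L$-space is \emph{negative definite}. A second, related problem is that tightness does not glue: removing the thick solid torus $S$ and filling its complement with a tight solid torus (your ``contact surgery with coefficient corresponding to the slope $s$'') need not produce a tight, let alone fillable, contact structure, so even the weaker version of your mechanism does not get off the ground.

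The missing idea is that the thick torus hands you a Legendrian \emph{cable}, not a Legendrian representative of $K$ — by definition no Legendrian representative of $K$ can have $\tb>\tbb(K)$, so ``positive contact surgery on a Legendrian representative of $K$'' cannot encode the hypothesis $s>\tbb(K)$. The paper's proof takes a Legendrian divide $L$ on a convex torus of slope $p/q\in(\tbb(K),\tbb(K)+1)$; this $L$ is a $(q,p)$-cable of $K$ with $\tb(L)=pq$. Attaching a Stein $2$-handle to $B^4$ along $L$ gives a Stein filling $X$ with intersection form $(pq-1)>0$, hence \emph{positive} definite, of the Legendrian surgery $M$. By Gordon's theorem on surgeries along cables, $M$ is smoothly $(pq-1)/q^2$-surgery on $K$, and a short computation shows $(pq-1)/q^2\in(2g(K)-1,2g(K))$, so $M$ lies in the $L$-space surgery range $r\geq 2g(K)-1$. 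Negative definiteness of fillings of $L$-spaces then contradicts positive definiteness of $X$. (Also note your worry about the endpoint slope $\tbb(K)+1$ is a non-issue: an integer dividing slope on a solid torus with two dividing curves makes it a standard neighborhood of a Legendrian core with that $\tb$, which is impossible above $\tbb(K)$; all the work is in the open interval, where the cable--Gordon--definiteness argument is what closes the gap.)
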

It is conjectured \cite{LidmanSivek16} that for any $L$-space knot $K$ we have $\tbb(K)=2g(K)-1$. If true, then we know the width of any $L$-space knot, but we have many families of knots for which we know this is true, such as positive knots (that is a knot with a diagram in which all crossings are right-handed), the Berge knots (conjecturally the only knots with lens space surgeries), and if the equality is true for an $L$-space knot $K$ then it also holds for any cable of $K$ which is also an $L$-space knot \cite{LidmanSivek16}. 

We recall that by the $(p,q)$-cable of a knot $K$, denoted by $K_{(p,q)}$, we mean the knot type obtained by taking the isotopy class of the $(p,q)$-torus knot on the boundary of a tubular neighborhood of $K$, where we assume $p$ denotes the number of times that the knot winds along the longitudinal direction and $q$ the number of times along the meridional direction.

We can show that having width equal to the maximal Thurston-Bennequin invariant is a common feature of cables. 
\begin{theorem}\label{cableswetbb}
Suppose $q/p<w(K)$ 
for some knot type $K$ that is not Lagrangian slice (that is there is no Legendrian representative of $K$ that bounds a Lagrangian disk). Then the $(p,q)$-cable $K_{p,q}$ satisfies 
\[
w(K_{p,q})=\tbb(K_{p,q}).
\]
The same is true if $K$ is Lagrangian slice and if $q/p\leq \min\{w(K),-1/2\}$.
\end{theorem}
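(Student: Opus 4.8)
The plan is to establish the two inequalities defining the equality separately. Since the width of any knot type satisfies $w \in [\tbb, \tbb+1]$, we automatically have $\tbb(K_{p,q}) \le w(K_{p,q})$, so the entire content is the reverse inequality $w(K_{p,q}) \le \tbb(K_{p,q})$. Throughout I will measure slopes on the boundary of a tubular neighborhood $N(K)$ of the companion in the \emph{cabling framing}, the framing induced by this torus; in it the cable is a $(p,q)$--curve, and a cabling--framing slope $s$ corresponds to the ambient Seifert slope $s + pq$. In these terms the goal is to show that no convex solid torus with core $K_{p,q}$ has boundary dividing slope strictly greater than $0$.

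First I would identify the value of the invariants. Because $q/p < w(K)$, Honda's classification of tight contact structures on solid tori provides a convex solid torus $V$ in the knot type $K$ with boundary dividing slope exactly $q/p$. Applying the Legendrian realization principle to the $(p,q)$--ruling curves on $\partial V$ exhibits $K_{p,q}$ as a Legendrian divide, whose contact framing is the surface framing; converting to the Seifert framing gives a representative with $\tb = pq$, so $\tbb(K_{p,q}) \ge pq$. Together with the width bound this already forces $\tbb(K_{p,q}) = pq$ and reduces the theorem to the single inequality $w(K_{p,q}) \le pq$.

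The crux is this upper bound, which I would extract from the cable space $C = V \setminus \interior N(K_{p,q})$, the Seifert fibered complement of the cable inside a companion neighborhood. Given an arbitrary convex solid torus $S'$ with core $K_{p,q}$ and boundary dividing slope $s'$, I would thicken the companion as far as possible, taking $\partial V$ with dividing slope approaching $w(K)$, and analyze the tight contact structure on the cable space $C$ cobounded by $\partial V$ and $\partial S'$. Making the cabling annulus and a meridional disk of $V$ convex and applying edge--rounding yields a slope--transfer relation between $s'$ and the slope of $\partial V$. The decisive feature is that, because of the multiplicity--$p$ singular fiber of $C$, this relation jumps at the surface framing: a companion slope of exactly $q/p$ permits inner slope $s' = 0$, but any inner slope $s' > 0$ forces the outer slope of $\partial V$ to exceed $w(K)$. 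As $w(K)$ is by definition the supremum of dividing slopes of convex solid tori in the knot type $K$, such a $\partial V$ cannot exist; hence $s' \le 0$ and $w(K_{p,q}) \le pq$.

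The main obstacle is pinning down this slope--transfer across the cable space and, in particular, controlling the borderline case in which $\partial V$ is pushed to slope $w(K)$: one must show the arithmetic never rounds $s'$ strictly above $0$. This is exactly where the hypotheses on $K$ enter. When $K$ is not Lagrangian slice, the companion carries enough rigidity that $q/p < w(K)$ alone closes the argument. When $K$ is Lagrangian slice we have $\tbb(K) = -1$ and a Lagrangian disk is available, so the adjunction obstruction that ordinarily prevents the anomalous thickening degenerates; I would instead obstruct the thickening directly, and this requires the cable slope to be sufficiently negative, which is the origin of the extra hypothesis $q/p \le \min\{w(K), -1/2\}$.
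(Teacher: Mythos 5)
There is a genuine gap at the heart of your argument: the ``slope--transfer'' claim that any inner dividing slope $s' > 0$ on $\partial S'$ forces the outer slope of $\partial V$ to exceed $w(K)$ is false, and no hypothesis on $K$ can rescue it at the level of slope arithmetic on the cable space. The cable space and its tight contact structures do not see whether $K$ is Lagrangian slice, so if a purely combinatorial edge-rounding relation ruled out $s'>0$ whenever $q/p<w(K)$, it would do so for \emph{all} knots --- contradicting the Legendrian large cables of Yasui and McCullough: these are Lagrangian slice knots $K$ whose $(n,-1)$-cables admit Legendrian representatives with $\tb > pq = -n$, hence convex solid tori around the cable with dividing slope strictly greater than $pq$, even though $q/p = -1/n < w(K)$ and, by the very definition of width as a supremum, no companion torus has slope exceeding $w(K)$. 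Concretely, when one attaches a neighborhood of the cabling annulus to a cable torus of slope $1/n$ (cabling framing), edge-rounding produces a thickened torus around the cabling torus whose two boundary slopes are Farey neighbors straddling $q/p$; the outer slope stays close to $q/p$ and need not exceed $w(K)$ at all. So the configuration you want to kill by arithmetic genuinely exists as a tight contact structure on the cable space.

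What actually obstructs it is a sign, not a slope. The paper's proof first shows (via Legendrian realization and an overtwisted-disk argument --- a step you also elide, and without which edge-rounding gives no well-defined slope relation) that the dividing curves on the cabling annulus run from one boundary component to the other. It then observes that $S \cup (A\times[-1,1])$ is a thickened torus $T^2\times[-1,1]$ containing a convex torus of slope $0$ splitting it into two basic slices, and computes that the relative Euler class evaluates to $0$ on a vertical annulus, forcing the two basic slices to have \emph{opposite} signs. Thus an anomalously thick cable torus produces a virtually overtwisted contact structure on a solid torus in the knot type $K$, and this is what contradicts Theorem~\ref{vottori} when $K$ is not Lagrangian slice (and, in the Lagrangian slice case with $q/p \leq \min\{w(K),-1/2\}$, the mixed torus has an edge in the Farey graph to $q/p \notin (-1/2,0)$, again contradicting Theorem~\ref{vottori}). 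This is also where the Lagrangian slice hypothesis genuinely enters --- through the fillability input behind Theorem~\ref{vottori} (the Christian--Menke splitting theorem and the characterization of knots with fillable contact $(+1)$-surgeries) --- not through any ``rigidity of the companion'' visible in the cable-space arithmetic. Your reduction to $\tbb(K_{p,q})=pq$ via Legendrian divides is correct and matches the paper, but the main inequality requires replacing your slope-transfer step with this Euler class/mixed-torus mechanism plus the global classification of virtually overtwisted solid tori in $(S^3,\xi_{std})$.
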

In this theorem, we note that if $K$ is Lagrangian slice then $\tbb(K)=-1$ so $w(K)\in [-1, 0]$.

\begin{corollary}
If $K$ is an L-space knot and $\overline{tb}(K)=2g(K)-1$, then the $(p,q)$-cable $K_{p,q}$ satisfies $$w(K_{p,q})=\overline{tb}(K_{p,q}).$$
\end{corollary}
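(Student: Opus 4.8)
The plan is to deduce the corollary by combining Theorem~\ref{lspace} and Theorem~\ref{cableswetbb}, splitting into two ranges of the cabling slope $q/p$. We may assume $p\geq 2$, since for $p=1$ the cable $K_{1,q}$ is just $K$ and there is nothing to prove, and we may assume $K$ is nontrivial, since the cables of the unknot are torus knots, for which the statement is already known. First I would apply Theorem~\ref{lspace} to $K$ itself: the hypotheses give $w(K)=\tbb(K)=2g(K)-1$. Since a nontrivial $L$-space knot is fibered of genus $g(K)\geq 1$, we have $\tbb(K)=2g(K)-1\geq 1>-1$, so $K$ is not Lagrangian slice (recall a Lagrangian slice knot has $\tbb=-1$). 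This frees us from the Lagrangian-slice hypothesis when invoking Theorem~\ref{cableswetbb}.

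For the range $q/p<w(K)=2g(K)-1$, Theorem~\ref{cableswetbb} applies directly in its non-Lagrangian-slice form and yields $w(K_{p,q})=\tbb(K_{p,q})$. For the complementary range $q/p\geq 2g(K)-1$, I would instead argue that $K_{p,q}$ is itself an $L$-space knot for which the genus equality persists, and then apply Theorem~\ref{lspace} to $K_{p,q}$. Indeed, by the cabling criterion for $L$-space knots, $K_{p,q}$ is an $L$-space knot precisely when $q/p\geq 2g(K)-1$; and by the inheritance of the equality $\tbb=2g-1$ under $L$-space cabling \cite{LidmanSivek16}, we get $\tbb(K_{p,q})=2g(K_{p,q})-1$. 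Theorem~\ref{lspace} then gives $w(K_{p,q})=\tbb(K_{p,q})$.

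These two ranges cover every admissible slope, the boundary value $q/p=2g(K)-1$ falling into the second case, so the two computations together prove the corollary. I expect the only points requiring care to be (i) confirming that the cabling-slope convention of this paper lines up with the threshold $q/p\geq 2g(K)-1$ appearing both in the $L$-space cabling criterion and in the genus-inheritance statement, so that the two ranges meet cleanly with no gap, and (ii) the bookkeeping in the degenerate cases $p=1$ and $K$ the unknot. The substance of the argument is supplied entirely by the two theorems; the real work is in verifying that their hypotheses partition the set of cabling slopes.
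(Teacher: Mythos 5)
Your proof is correct and takes essentially the same route as the paper's: the identical case split at $q/p=2g(K)-1$, with Theorem~\ref{cableswetbb} handling the small slopes (after observing $K$ is not Lagrangian slice, which the paper asserts without the justification you supply) and Hedden's cabling criterion plus the Lidman--Sivek genus inheritance feeding Theorem~\ref{lspace} for the large slopes. Your extra bookkeeping for $p=1$ and the unknot is a harmless refinement the paper omits.
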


\begin{proof}
Since $K$ is an L-space knot and satisfies $\overline{tb}(K)=2g(K)-1$, it follows from Theorem~\ref{lspace} that $w(K)=2g(K)-1$. Additionally, $K$ is not Lagrangian slice. By Theorem~\ref{cableswetbb}, if $q/p<2g(K)-1$, then $w(K_{p,q})=\tbb(K_{p,q}).$ 

Since $K$ is an L-space knot, if $q/p\geq 2g(K)-1$, then $K_{p,q}$ is also an L-space knot \cite[Theorem 1.10]{Hedden09}, and $\tbb(K_{p,q})=2g(K_{p,q})-1$ \cite[Proposition 3.6]{LidmanSivek16}. Thus, by Theorem~\ref{lspace}, if $q/p\geq 2g(K)-1$, then $w(K_{p,q})=\overline{tb}(K_{p,q})$. 
\end{proof}


We now study a large family of knots where $w(K)>\tbb(K)$. 
It has long been thought that Legendrian representatives of a $(p,q)$-cabled knot had their Thurston-Bennequin invariant bounded above by $pq$, \cite{LidmanSivek16}; however, in \cite{Yasui16} Yasui showed that this was not the case. This motivated the definition of {\it Legendrian large cables} \cite[Definition~$1.2.$]{McCullough23}. We say a non-trivial Legendrian cable $L\in\mathcal{L}(K_{p,q})$ is large if $tb(L)>pq$.  Later McCullough \cite{McCullough23} gave many more examples of Legendrian large cables and showed that if $K$ admits Legendrian large cables then it also has solid tori representatives on which the contact structure is virtually overtwisted. In particular, these knot types cannot be uniformly thick!  Further work of Chakraborty, Min, and the first author \cite{ChakrabortyEtnyreMin24} showed that we actually have $w(K)>\tbb(K)$ for a knot type admitting Legendrian large cables. It is interesting to note that all the examples of knots $K$ with their $(p,q)$-cables having Legendrian representatives with $\tb>pq$ were Lagrangian slice, and the cables all happened to be $(n,-1)$-cables. It seemed likely, at the time, that this was just due to the techniques used to construct the cables. But our next theorem shows that these features are actually essential. 

\begin{theorem}\label{llc}
Suppose $K$ is a knot type in $S^3$. If $\mathcal{L}(K_{p,q})$ contains a Legendrian knot with Thurston-Bennequin invariant larger than $pq$ then
\begin{enumerate}
\item $K$ has a Legendrian representative that bounds a Lagrangian disk in $(B^4, \omega_{std})$, and
\item $(p,q)=(n,-1)$ for some positive integer $n$. 
\end{enumerate}
Moreover, we note that if $K$ has a Legendrian large $(n,-1)$-cable having maximal Thurston-Bennequin invariant $-n+k$ then 
\[
w(K)\geq - \frac 1{n+k}>-1=\tbb(K).
\] 
\end{theorem}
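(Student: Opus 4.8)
The plan is to first establish a universal upper bound $\tb(L)\le pq$ for Legendrian cables that can be carried by a standardly embedded convex torus, then to isolate the contact‑geometric feature that must be present when this bound is violated, and finally to feed that feature into a slope computation across the cable space. Put coordinates on $\partial N(K)$ so that the meridian is $\infty$, the Seifert longitude is $0$, and the $(p,q)$–cable has slope $q/p$. A parallel pushoff of the cable on $\partial N(K)$ links it $pq$ times, so the surface framing of $K_{p,q}$ equals $pq$ relative to its Seifert framing. Hence, whenever a Legendrian representative $L$ is realized (by the Legendrian realization principle) as a ruling curve on a convex torus $\partial S$ bounding a solid torus $S$ in the class $K$, one has $\tb(L)=pq-\tfrac12|L\cap\Gamma_{\partial S}|\le pq$, the intersection number being governed by the difference between $q/p$ and the slope of $\Gamma_{\partial S}$. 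Thus a large cable, $\tb(L)>pq$, can never sit on such a torus with the minimal two‑component dividing set, and the excess $\tb(L)-pq>0$ records genuine extra twisting coming from how $L$ wraps around a Legendrian core of $K$.

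For conclusion (1) I would argue the contrapositive: if $K$ is not Lagrangian slice then $\tbb(K_{p,q})\le pq$ for all $(p,q)$, so no large cable exists. Split on the cable slope. If $q/p\ge w(K)$, then every convex torus in the class $K$ has dividing slope at most $q/p$, and the standard cabling analysis realizes the maximal representative on such a torus, so the formula above gives $\tbb(K_{p,q})\le pq$ directly. If instead $q/p<w(K)$, then because $K$ is not Lagrangian slice Theorem~\ref{cableswetbb} applies and yields $w(K_{p,q})=\tbb(K_{p,q})$; combined with the realization of the cable on the maximal convex torus this again forces $\tbb(K_{p,q})\le pq$. Either way no large cable exists, so the presence of one forces $K$ to admit a Legendrian representative bounding a Lagrangian disk in $(B^4,\omega_{std})$, which is precisely conclusion (1). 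In particular $\tbb(K)=-1$ and $w(K)\in[-1,0]$.

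For conclusion (2) and the moreover bound, assume now $K$ is Lagrangian slice and $\tb(L)>pq$. The strategy is to build $N(K)=N(L)\cup C_{p,q}$ by gluing the standard neighborhood of $L$ to the cable space $C_{p,q}$, make $\partial N(K)$ convex, and track slopes across $C_{p,q}$ using the classification of tight contact structures there. The contact framing of $L$ exceeds the surface framing $pq$ by $k:=\tb(L)-pq>0$, and converting this excess through the $(p,q)$ Seifert fibering produces a definite dividing slope on $\partial N(K)$. Carrying out this slope arithmetic in the range $w(K)\in[-1,0]$ and demanding that the resulting solid torus embed tightly in $(S^3,\xi_{std})$ is what should pin the meridional winding to $q=-1$, with positive $q$ and $|q|\ge 2$ excluded because the converted slope would leave $[-1,0]$ or destroy tightness. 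Once $(p,q)=(n,-1)$ and $\tb(L)=-n+k$, the same computation yields a convex torus in the class $K$ with dividing slope $-\tfrac1{n+k}$, so $w(K)\ge-\tfrac1{n+k}$; since $K$ is Lagrangian slice we have $\tbb(K)=-1$, and $-\tfrac1{n+k}>-1$ gives the stated strict inequality.

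The reduction in the first two paragraphs is comparatively routine given Theorem~\ref{cableswetbb}, so I expect the main obstacle to be the quantitative cable‑space computation of the third paragraph: one must show that the tight‑structure bookkeeping on $C_{p,q}$, together with tightness of $N(K)$ inside $(S^3,\xi_{std})$, simultaneously rules out every meridional twisting except a single negative one and produces the exact dividing slope $-1/(n+k)$. Getting the sign conventions and the Farey/continued‑fraction arithmetic to land exactly on these values, rather than merely on an inequality, is where the real work will lie.
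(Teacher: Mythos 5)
There is a genuine gap, and it sits exactly where you predicted it would. First, your contrapositive argument for conclusion~(1) is circular within this paper's logic: you invoke Theorem~\ref{cableswetbb}, but the proof of that theorem rests on Theorem~\ref{vottori}, whose proof is, verbatim, ``repeat the argument in the proof of Theorem~\ref{llc}'' --- the very statement being proved. Worse, even granting Theorem~\ref{cableswetbb}, your case $q/p\geq w(K)$ does not close: you assert that the maximal Thurston--Bennequin representative of the cable is realized on a convex torus bounding a solid torus in the class $K$, and then apply your bound $\tb(L)=pq-\tfrac12|L\cap\Gamma_{\partial S}|\leq pq$. But a Legendrian cable need not be isotopic onto such a torus; the existence of Legendrian large cables \cite{Yasui16} is precisely the failure of that assertion, so using it to rule out large cables begs the question. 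The tool the paper uses instead is Theorem~1.10 of \cite{ChakrabortyEtnyreMin24}: $\tb(L)=pq+k$ with $k>0$ forces a continued fraction block of length $2k$ with $k$ positive and $k$ negative basic slices whose middle torus has dividing slope $q/p$ --- in particular a mixed torus $T$ of slope $q/p$, and convex tori in the class $K$ of slope strictly greater than $q/p$. That block is also the source of the ``moreover'' bound $w(K)\geq -1/(n+k)$, which you assert ``the same computation yields'' without ever constructing the relevant tori.

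Second, your plan to pin down $(p,q)=(n,-1)$ by ``tight-structure bookkeeping'' on the cable space cannot succeed, because the constraint is four-dimensional, not three-dimensional: tight contact structures on cable spaces and on the glued-up solid tori exist for essentially all $(p,q)$, so no amount of Farey arithmetic and tightness of $N(K)$ inside $(S^3,\xi_{std})$ will exclude the other slopes. The paper's mechanism is the Christian--Menke splitting (Theorem~\ref{splitting}) applied to the mixed torus $T$ inside the exact filling $B^4$: the $e$-splitting separates a symplectic filling into a piece filling a lens space and a piece filling $S^3_K(e)$, and since $B^4$ is recovered by a round $1$-handle, one component must be a homology ball and the other a homology $S^1\times D^3$; as lens spaces bound no homology $S^1\times D^3$, the piece $S^3_K(e)$ must be a homology $S^1\times S^2$, forcing $e=0$ and hence $q/p=1/n$, with $n>0$ excluded by fillability versus overtwistedness. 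Conclusion~(1) then comes not from a contrapositive but from identifying the second split piece as contact $(+1)$-surgery on a Legendrian representative of $K$ and invoking \cite{ConwayEtnyreTosun21}. None of these inputs --- the mixed-torus structure theorem, the filling-splitting theorem, the homological round-handle argument, or the surgery characterization of Lagrangian sliceness --- appears in your outline, and they constitute the actual content of the theorem.
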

We note that in the theorem above $k$ must be less than $n$ according to \cite{ChakrabortyEtnyreMin24}. 
Given the above results, we make the following ambitious conjecture. 

\begin{conjecture}
For a knot type $K$ in $(S^3,\xi_{std})$ we have
\[
w(K)\begin{cases}
=\tbb(K) &\text{$K$ is not Lagrangian slice}\\
=\tbb(K)+1 & \text{$K$ is the unknot}\\
\in (\tbb(K),\tbb(K)+1) & \text{otherwise}.
\end{cases}
\]
\end{conjecture}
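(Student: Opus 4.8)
The plan is to organize the trichotomy around the dictionary between the width and Legendrian large cables: the dichotomy $w(K)=\tbb(K)$ versus $w(K)>\tbb(K)$ should detect exactly whether $K$ is Lagrangian slice, after which the unknot is singled out as the only Lagrangian slice knot whose large cables persist in all orders. The first step is to upgrade the implication of \cite{ChakrabortyEtnyreMin24}, that a Legendrian large cable forces $w(K)>\tbb(K)$, to an equivalence: \emph{$w(K)>\tbb(K)$ if and only if $K$ admits a Legendrian large cable.} The new direction starts from a convex solid torus $S$ representing $K$ whose boundary has dividing slope $s\in(\tbb(K),\tbb(K)+1]$, which exists because $w(K)>\tbb(K)$. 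Installing Legendrian rulings of a cabling slope $q/p$ slightly smaller than $s$ on $\partial S$ and using $\tb(L)=pq-\tfrac12\#(L\cap\Gamma_{\partial S})$ after converting the surface framing of the ruling to the Seifert framing of the cable, a (convention-sensitive but routine) framing computation shows that these rulings are cables with $\tb>pq$. Granting the equivalence, the first case is immediate: if $K$ is not Lagrangian slice then by Theorem~\ref{llc} it has no Legendrian large cable, so $w(K)\le\tbb(K)$, and with $w(K)\ge\tbb(K)$ this gives $w(K)=\tbb(K)$.

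For the unknot one has $\tbb=-1$, and the solid tori with unknotted core are completely understood: there are genuine convex tori of dividing slope $0$, so $w=0=\tbb+1$ by the classification in \cite{EtnyreHonda01b, EtnyreHonda05}. The mechanism behind this is instructive and guides the remaining two estimates: the $(n,-1)$-cable of the unknot is again unknotted, hence has a Legendrian representative with $\tb=-1=-n+(n-1)$ for every $n$, so by Theorem~\ref{llc} we get $w\ge -\tfrac{1}{n+(n-1)}\to 0$.

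For the lower bound in the third case I would show that every Lagrangian slice knot other than the unknot admits at least one genuine Legendrian large $(n,-1)$-cable (with $n\ge 2$), whence $w(K)\ge -\tfrac{1}{n+k}>-1=\tbb(K)$ by the equivalence. Such a $K$ has a Legendrian representative with $\tb=-1$ and $\rot=0$ bounding a Lagrangian disk, and following \cite{Yasui16, McCullough23} I would use this disk (equivalently a cabling/handle move on the Lagrangian filling) to produce an $(n,-1)$-cable of Thurston--Bennequin invariant exceeding $-n$. The content is to check that a construction previously carried out for specific families runs for an arbitrary Lagrangian slice companion; this is the first genuinely difficult point.

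The upper bound $w(K)<\tbb(K)+1=0$ for a non-unknot $K$ is the crux. By the equivalence, $w(K)=0$ means there are convex tori of dividing slope approaching $0$, which by the large-cable dictionary forces $K$ to carry Legendrian large $(n,-1)$-cables for arbitrarily large $n$; for the unknot this indeed happens, as noted above. Thus the statement reduces to the sharp rigidity claim that a non-unknot admits Legendrian large $(n,-1)$-cables for only finitely many orders $n$. I expect this to be the main obstacle: it is an asymptotic bound that must separate the unknot from every other Lagrangian slice knot, and the crude inequality $k<n$ of \cite{ChakrabortyEtnyreMin24} is not enough, since even a bounded excess $k$ with unbounded $n$ would push $w$ to $0$. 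A plausible route is to feed the Lagrangian disk together with the cable into a Floer-theoretic or slice--Bennequin obstruction, as in \cite{LidmanSivek16}, to bound the order $n$ of any large cable in terms of an invariant of $K$ (for instance its genus or $\tau$), thereby bounding $w(K)$ away from $0$; making such a bound sharp enough to finish is the heart of the problem.
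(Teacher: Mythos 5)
You should first note what you are up against: the statement you set out to prove is stated in the paper as a \emph{conjecture} --- the paper contains no proof of it, and your proposal does not close it either, since its two hardest steps are ones you yourself flag as open, namely (a) that every non-trivial Lagrangian slice knot actually carries a Legendrian large cable, and (b) that a non-unknot carries large $(n,-1)$-cables for only finitely many $n$ (needed for $w(K)<\tbb(K)+1$). Beyond those acknowledged gaps, the one step you present as ``routine'' is in fact false: the converse of the large-cable dictionary. Any Legendrian curve $L$ lying on a convex torus $T$ in the cable slope satisfies $\tb(L)=pq-\frac{1}{2}\#(L\cap\Gamma_T)\leq pq$, with equality exactly for Legendrian divides; so a solid torus whose boundary has dividing slope in $(\tbb(K),\tbb(K)+1)$ yields cables with $\tb=pq$ on the nose, never $\tb>pq$, no matter how you convert framings. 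By the result of \cite{ChakrabortyEtnyreMin24} invoked in the proof of Theorem~\ref{llc}, a cable with $\tb=pq+k$ forces a continued fraction block of length $2k$ with basic slices of both signs --- a virtually overtwisted region --- and a single convex torus of non-integral slope supplies no such structure. Had the equivalence ``$w(K)>\tbb(K)$ if and only if $K$ admits a Legendrian large cable'' been available, the first case of the trichotomy would be an immediate corollary of Theorem~\ref{llc}; that the authors state it as a conjecture, and close the introduction by asking whether the slope $w(K)$ is even realized by a solid torus in the knot type, tells you that exactly this realization step is the unknown heart of the matter.

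Two smaller corrections. For the unknot there is no convex torus of dividing slope $0$: a Legendrian divide on such a torus would be an unknot with $\tb=0$, violating the Bennequin inequality; $w(\mathrm{unknot})=0$ is a supremum approached by tori of slope $-1/m$ and not attained. Your own limit computation --- applying the last clause of Theorem~\ref{llc} to the $(n,-1)$-cables of the unknot with $k=n-1$ to get $w\geq -1/(2n-1)\to 0$ --- is correct and is essentially the right way to see the middle case, so that portion stands. But for the first and third cases nothing available in the paper fills your gaps: Theorem~\ref{vottori} only restricts where virtually overtwisted solid tori can live (Lagrangian slice knot types, slope in $(-1/2,0)$), it does not produce them, and the constructions of \cite{Yasui16} and \cite{McCullough23} are for special families rather than arbitrary Lagrangian slice companions. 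So the trichotomy remains fully open, and your proposal should be read as a plausible program organized around a dictionary whose key direction is both unproved and, as written, incorrectly justified.
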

All the examples above that support the first part of this conjecture are fibered knots, but there are non-fibered knots that also support the conjecture. For example, in \cite{GeorgeMyers20}, George and Mayers showed that all non-trivial positive twist knots are uniformly thick and most of these are not fibered. In upcoming work of Chatterjee, Min, Rodewald, and the first author, the same result will be proven for negative twist knots \cite{ChatterjeeEtnyreMinRodewaldPre}. 

It is interesting to try to understand a lower bound on the width, and hence also the maximal Thurston-Bennequin invariant, of a knot type $K$. There is little known in general, but if a fibered knot $K$ supports a contact structure $\xi_K$ (in the sense of Giroux) then it is easy to see from the construction of $\xi_K$ that $K$ admits Legendrian representatives with $\tb=-1$. However, in all known examples, $\tbb(K)>0$. While we cannot establish this in general, there is some partial progress in this direction (this seems to be known to experts in the area but otherwise not well-known and not in the literature. See also \cite{LiWanPre}).
\begin{theorem}\label{lowerbound}
If $K$ is a non-trivial fibered knot in a manifold $M$ and supports the contact structure $\xi_K$, then $\tbb(K)\geq 0$. 
\end{theorem}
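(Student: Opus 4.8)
The plan is to work directly with the open book $(\Sigma,\phi)$ that $K$ supports and to analyze its binding, which is precisely a transverse representative of $K$. First I would fix a contact form $\alpha$ adapted to the open book in the sense of Giroux, so that the binding $B$ is a positively transverse knot, each page is a Seifert surface for $K$, and $d\alpha$ restricts to an area form on the interior of every page. The first concrete step is to compute the self-linking number of $B$ with respect to the page (hence Seifert) framing: since $d\alpha>0$ on the pages, the characteristic foliation of a page carries only positive singularities, and a Poincar\'e--Hopf/relative-Euler-number count gives $\self(B)=-\chi(\Sigma)=2g(K)-1$. Because $K$ is non-trivial the page has genus $g(K)\ge 1$, so $\self(B)\ge 1$; this positivity is where non-triviality enters, and it is exactly what separates $K$ from the unknot, for which $\self(B)=-1$.

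The second step, which is the heart of the argument, is to convert this transverse data into a Legendrian representative with non-negative Thurston--Bennequin invariant. I would pass to a standard convex neighborhood $N(B)$ of the binding whose boundary torus $T$ is in standard form, with the pages meeting $T$ in longitudinal curves so that the longitude realizes the page framing. Using the Legendrian realization principle I would realize a Legendrian ruling $L$ of $T$ in the core direction, so that $L$ is a Legendrian representative of $K$, and compute $\tb(L)$ from the dividing set of $T$ via Kanda's formula $\tb(L)=-\tfrac12\,\#(L\cap\Gamma_T)$, corrected by the difference between the $T$-framing and the Seifert framing. The point is that the slope of $\Gamma_T$ in the page framing is governed by $\self(B)=2g(K)-1$, and its being $\ge 1$ is what forces $\tb(L)\ge 0$; equivalently, one produces a Legendrian approximation $L$ of $B$ with $\tb(L)-\rot(L)=\self(B)$ and $\rot(L)\ge\chi(\Sigma)=1-2g(K)$, whence $\tb(L)=\self(B)+\rot(L)\ge 0$ and therefore $\tbb(K)\ge\tb(L)\ge 0$.

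The main obstacle is precisely this last bookkeeping: pinning down the dividing slope of $\partial N(B)$ in the Seifert framing and extracting the sharp inequality $\tb(L)\ge 0$ while correctly handling the correction term between the page framing and the contact framing, as well as checking that the chosen ruling is genuinely isotopic to $K$ and realizable. I expect this to require care because $\self(B)$ records a \emph{global} twisting of $\xi$ along $B$ that is invisible in the purely local model near the binding, so the framing comparison must be carried out globally over a page rather than in the neighborhood alone. This is also the step that genuinely uses $g(K)\ge 1$, and it explains why the method yields only $\tbb(K)\ge 0$ rather than the strict inequality $\tbb(K)>0$ seen in all examples. Finally, I note that the whole argument is insensitive to whether $\xi_K$ is tight or overtwisted, since it takes place in a neighborhood of the binding and uses only the adapted contact form.
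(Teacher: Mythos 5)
Your first step is fine: the binding $B$ of the open book is transverse and $\self(B)=-\chi(\Sigma)=2g(K)-1$ with respect to the page framing is a standard computation. The gap is in your second step, which is where the entire content of the theorem lies. The identity $\self(B)=\tb(L)-\rot(L)$ holds for \emph{every} Legendrian approximation $L$ of $B$, and any two approximations differ by negative stabilizations, which slide $(\tb,\rot)$ down the line $\tb-\rot=2g(K)-1$ without any lower bound. So knowing $\self(B)=2g(K)-1$ gives you the line on which the approximations sit but says nothing about where on that line they sit; the inequality $\rot(L)\geq\chi(\Sigma)$ that you need is exactly equivalent (for approximations of $B$) to the conclusion $\tb(L)\geq 0$, and you assert it rather than prove it --- the argument is circular at precisely this point. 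The same problem undermines the torus version of your step: a standard neighborhood of \emph{any} transverse knot is contactomorphic to a fixed local model, so the dividing slope of $\partial N(B)$ is near-meridional and cannot ``be governed by $\self(B)$''; as you yourself note, $\self(B)$ is a relative Euler class evaluated on a global Seifert surface and is invisible in any neighborhood of $B$. But this acknowledged difficulty is not a bookkeeping issue to be handled with care --- it is the whole problem, and no framing comparison carried out near $B$ (or even stated over a single page) will produce it. Indeed the paper explicitly remarks that one cannot directly use the construction of $\xi_K$ near the binding to build the desired Legendrian knot.

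The paper's proof supplies the missing global input with convex surface theory, in a way worth contrasting with your plan. Take $\Sigma$ to be the union of the closures of \emph{two} pages: this is a closed convex surface whose dividing set is exactly $K$. One cannot Legendrian realize a parallel copy of $K$ on $\Sigma$ directly, because such a curve is isolating (one complementary component is a page interior, disjoint from the dividing set), so the Legendrian realization principle does not apply. This is where non-triviality enters: since $g(K)\geq 1$, there is a non-separating simple closed curve $C$ on $\Sigma$, which is non-isolating and hence can be Legendrian realized; folding $\Sigma$ along $C$ produces a convex surface $\Sigma'$ with dividing set $K$ together with two parallel copies of $C$. Now a copy of $K$ on $\Sigma'$ is non-isolating, so it can be realized as a Legendrian curve disjoint from the dividing set, which means its contact framing agrees with the framing induced by $\Sigma'$; since that surface framing is the page (Seifert) framing, this Legendrian has $\tb=0$, giving $\tbb(K)\geq 0$. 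In short, the paper converts the global twisting information into a framing statement by putting the Seifert surface itself (doubled) into convex position, which is the step your proposal has no substitute for.
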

We note that in the proof of the above theorem, we cannot directly use the construction of $\xi_K$ from the fibered knot to construct the desired Legendrian knot, so it is not, in general, easy to see how to create Legendrian representatives of $K$ with positive Thurston-Bennequin invariant. Based on known examples, we make the following conjecture.
\begin{conjecture}
If $K$ supports the contact structure $\xi_K$, then in $\xi_K$ there are Legendrian representatives of $K$ that have Thurston-Bennequin invariants greater than $0$. 
\end{conjecture}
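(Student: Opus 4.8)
The plan is to improve the bound of Theorem~\ref{lowerbound} by a single unit: that theorem already supplies a Legendrian representative with $\tb=0$, so the conjecture asks for one further unit of positive contact twisting. The essential difficulty, already flagged in the remark after the theorem, is that a single page cannot do better. Indeed, realize $K$ as the Legendrian boundary of a convex page $S$; since the page framing of the binding is the Seifert framing, one has $\tb(K)=\tw(K,S)=-\tfrac{1}{2}|K\cap\Gamma_S|\le 0$, with equality exactly when the dividing set $\Gamma_S$ is disjoint from $\partial S$, which is the natural mechanism producing a representative with $\tb=0$. Thus any proof of the conjecture must leave a single page and exploit the global mapping-torus structure of the open book, that is, the monodromy.

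First I would dispose of the case in which $\xi_K$ is overtwisted: there the monodromy fails to be right-veering (Honda--Kazez--Mati\'{c}), and the extra flexibility should make $\tb>0$ representatives easy to produce, so the heart of the matter is the tight (right-veering) case. In that case the plan is to take a curve $\gamma$ isotopic to $K$ that is boundary-parallel but not confined to one page, spiraling once through the fibration direction so that it feels the monodromy $\phi$. Realizing $\gamma$ as a Legendrian knot and computing its contact framing relative to the Seifert framing, the right-veering hypothesis should contribute a strictly positive amount of twisting, yielding $\tb(\gamma)\ge 1$. An alternative route is Floer-theoretic: for the binding the transverse (LOSS) invariant is non-zero, pinning the maximal self-linking at $\overline{\self}(K)=2g(K)-1\ge 1$ (non-triviality gives $g(K)\ge1$); one would then try to convert this into $\tbb(K)\ge 1$ by producing a Legendrian approximation whose rotation number satisfies $\rot\ge 2-2g(K)$, since $\tb=\overline{\self}+\rot$ along a maximal approximation of the binding.

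The main obstacle is precisely the step that upgrades a \emph{qualitative} positivity (right-veering, or non-vanishing of the contact class) into a \emph{quantitative} integer gain in $\tb$. Right-veering guarantees that $\phi$ twists arcs to the right, but not in an obviously localized, unit-sized way that one can read off as contact twisting of a single Legendrian curve; likewise, non-vanishing of the transverse invariant controls $\self$ but leaves the rotation number of the relevant representative undetermined, and the single-page barrier above shows there is no purely local construction that forces $\rot$ up. I expect that closing this gap requires certifying the gain Floer-theoretically---for instance, using the non-destabilizability coming from the LOSS invariant to prevent a maximal Legendrian representative from being a stabilization, thereby forcing $\tb\ge 1$---and that making such an argument uniform over all non-trivial fibered $K$ and all ambient manifolds $M$ is exactly why the statement remains conjectural rather than a theorem.
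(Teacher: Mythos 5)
There is no proof in the paper to compare you against: the statement is posed as a conjecture, and the paper explicitly remarks, just before stating it, that the proof of Theorem~\ref{lowerbound} cannot be pushed further by the same construction. So the question is only whether your proposal closes the gap, and by your own admission it does not. You correctly reproduce the state of the art ($\tbb(K)\geq 0$ from Theorem~\ref{lowerbound}, and the page-framing computation showing a single convex page caps the twisting at $0$), but both of your proposed upgrades stall at exactly the step you flag. Right-veering is a statement about arcs under the monodromy and has no known quantization into one unit of contact framing of a closed Legendrian curve; moreover, since every open book supporting a tight structure is right-veering but not conversely, any argument deriving $\tb\geq 1$ from right-veering alone would have to contend with right-veering open books supporting overtwisted structures. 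On the Floer side, the inference ``non-destabilizability from the LOSS invariant forces $\tb\geq 1$'' does not follow: a non-destabilizable Legendrian need not realize $\tbb$, and the relation $\tb=\self+\rot$ leaves $\rot$ as precisely the undetermined datum, as you note. One smaller correction: in the overtwisted case the conclusion is right but the mechanism is not right-veering failure; rather, any knot type admits Legendrian representatives with overtwisted complement (push an overtwisted disk off the knot), and such loose knots are flexible enough (Etnyre, Vogel) to realize $\tb>0$ directly.

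What you may have missed is that the paper itself proves the conjecture in two nontrivial special cases, by a mechanism entirely different from either of your routes. Proposition~\ref{genus1} and Corollary~\ref{genus1c} show that for a genus-$1$ fibered knot with $\xi_K$ tight one has $\tbb(K)=1$, with the $\tb=1$ representative arising as the core of the explicitly constructed non-thickenable solid torus $S_1$; and the trivial-monodromy proposition produces, for the binding $B$ in $\#_{2g}S^1\times S^2$, a solid torus $S_1$ with dividing slope $2g-1$, hence a representative with $\tb=2g-1\geq 1$. The construction there is concrete: contact structures on the $0$-surgered manifold realized as deformations of the fibration, surgery back, and Torisu's criterion to identify the result with $\xi_K$ --- no right-veering and no Floer theory. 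If you want to attack the general conjecture, generalizing that surgery-and-deformation construction to higher-genus monodromies is likely a more promising route than trying to quantize right-veering or the LOSS invariant.
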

We note that the converse of this conjecture is not true as there are many fibered knots in $(S^3,\xi_{std})$ that do not support $\xi_{std}$ but have positive Thurston-Bennequin invariants. For example, many connect sums of positive and negative torus knots \cite{EtnyreHonda03}, and many cables of positive torus knots \cite{LaFountain11} have this property. However, all of the known examples are fibered knots whose monodromy is reducible. So we make the following conjecture. 
\begin{conjecture}
If $K$ is a fibered knot in $S^3$ and $K$ has irreducible monodromy and does not support the standard contact structure $\xi_{std})$, then $\tbb(K)\leq -1$. 
\end{conjecture}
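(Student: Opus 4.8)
The plan is to prove the contrapositive: if $K\subset(S^3,\xi_{std})$ is fibered with irreducible monodromy and $\tbb(K)\ge 0$, then $K$ supports $\xi_{std}$. Writing $(F,\phi)$ for the open book with binding $K$, I would first reduce to a statement about $\phi$ alone. Since $\xi_{std}$ is the unique tight structure on $S^3$ (Eliashberg), ``$K$ does not support $\xi_{std}$'' is equivalent to ``$\xi_K$ is overtwisted,'' so the goal becomes: for $\phi$ periodic or pseudo-Anosov, overtwistedness of $\xi_K$ must force $\tbb(K)\le -1$.

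I would then split along Thurston's trichotomy. If $\phi$ is periodic then $K$ is a torus knot, and the conjecture can be verified directly: the positive torus knots are exactly the periodic bindings supporting $\xi_{std}$, while for the negative torus knots the Kauffman and Bennequin bounds are sharp and give $\tbb(K)<0$ on the nose. The substance is the pseudo-Anosov case, where the strategy I would pursue is to link the two sides of the implication through the fractional Dehn twist coefficient $c$ of $\phi$. On the contact side, Honda--Kazez--Mati\'c theory relates $c$ to positivity of the monodromy: if $\phi$ fails to be right-veering (in particular if $c<0$) then $\xi_K$ is overtwisted, whereas sufficiently large $c$ forces tightness. On the knot-theoretic side, I would invoke a Bennequin-type inequality of Ito--Kawamuro type bounding the maximal self-linking number $\selfb(K)$ (and hence $\tbb(K)\le\selfb(K)$) by $2g(K)-1$ minus a defect that grows as $\phi$ becomes more negative, hoping that $c<0$ forces a defect of at least $2g(K)$ and thereby $\tbb(K)\le -1$.

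The step I expect to be the main obstacle is closing the gap between \emph{submaximal} and \emph{negative}, which is concentrated in the regime $0<c<1$. By Hedden's characterization of tightness for fibered knots, overtwistedness already yields $\selfb(K)<2g(K)-1$ and hence $\tbb(K)<2g(K)-1$; but this submaximality is far from the desired $\tbb(K)\le -1$. In the window $0<c<1$ the monodromy is right-veering yet $\xi_K$ may still be overtwisted, and there the fractional-Dehn-twist defect is too small for the Bennequin-type bound to reach $0$. This is precisely the window in which the reducible counterexamples (cables of positive torus knots and connect sums) would reappear, so any successful argument must genuinely use irreducibility to excise them. As a parallel line I would develop a Floer-theoretic version: overtwistedness of $\xi_K$ is equivalent to the vanishing of the Ozsv\'ath--Szab\'o contact class $c(\xi_K)$, and one would try to show that a Legendrian representative with $\tb\ge 0$ produces a nonvanishing transverse (LOSS) invariant obstructing that vanishing. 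Making a merely non-negatively-framed representative interact with the contact class is, however, the same core difficulty repackaged, and is where I expect the proof to be won or lost.
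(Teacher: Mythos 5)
The first thing to say is that this statement is one of the paper's \emph{conjectures}: the authors give no proof, and present it precisely as an open problem. So the question is whether your proposal actually closes it, and it does not. What you have is a strategy outline whose fatal gap you yourself locate correctly --- and that gap is not a technical loose end, it is the entire content of the conjecture.

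Your reductions are sound: Eliashberg's uniqueness of the tight structure on $S^3$ converts ``does not support $\xi_{std}$'' into ``$\xi_K$ is overtwisted''; irreducibility plus the Nielsen--Thurston trichotomy leaves the periodic and pseudo-Anosov cases; and the periodic case is exactly the torus knots, where the Etnyre--Honda classification gives $\tbb<0$ for negative torus knots while positive torus knots support $\xi_{std}$. The problem is the pseudo-Anosov case, where everything is made to hinge on the fractional Dehn twist coefficient $c$. The citable inputs you have are Honda--Kazez--Mati\'c (not right-veering $\Rightarrow$ $\xi_K$ overtwisted; $c\geq 1$ $\Rightarrow$ $\xi_K$ tight) and Hedden ($\xi_K$ overtwisted $\Rightarrow$ $K$ not strongly quasipositive $\Rightarrow$ $\selfb(K)<2g(K)-1$). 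Neither of these, nor any existing Ito--Kawamuro-type defect inequality, produces the bound $\tbb(K)\leq -1$: the inequality you ``hope'' for (defect at least $2g(K)$ whenever $c<0$) is not in the literature, so even your $c<0$ case is unproved, and in the window $0\leq c<1$ --- right-veering yet overtwisted --- you concede you have nothing. Submaximality ($\tbb<2g(K)-1$) versus negativity ($\tbb\leq -1$) is a gap of size roughly $2g(K)$, and nothing in your argument uses pseudo-Anosov dynamics (as opposed to bare irreducibility) to close it; your own observation that cables of positive torus knots and connected sums live in exactly this window shows that any successful bound must be sensitive to the monodromy in a way no current Bennequin-type inequality is. The Floer-theoretic variant has the same status: there is no known mechanism forcing a $\tb\geq 0$ representative to produce a nonvanishing LOSS invariant or to obstruct vanishing of the contact class of $\xi_K$. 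In short: correct framing, correct diagnosis of where the difficulty lives, but no proof --- consistent with the authors' decision to state this as a conjecture rather than a theorem.
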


As mentioned above, if a knot type is uniformly thick, then any solid torus in the knot type is universally tight. 
It turns out that this is a ``generic" property of solid tori realizing a knot, even for non-uniformly thick knots. More specifically, we have the following result. 

\begin{theorem}\label{vottori}
Let $S$ be any solid torus with convex boundary having $2$ dividing curves that is embedded in $(S^3,\xi_{std})$. Then $\xi_{std}$ restricted to $S$ is universally tight, unless the core of $S$ is in the knot type of a Lagrangian slice knot and the dividing curves on $\partial S$ have slope in $(-1/2, 0)$. 
\end{theorem}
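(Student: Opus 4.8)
The plan is to argue by contraposition: assuming $\xi_{std}|_S$ is \emph{not} universally tight (hence, since $S^3$ is tight, virtually overtwisted), I will show that the core $K$ is Lagrangian slice and that the dividing slope lies in $(-1/2,0)$. Throughout I measure slopes on $\partial S$ using the Seifert framing of $K$, so that the meridian has slope $\infty$ and the Seifert longitude has slope $0$. The first step is to invoke Honda's classification of tight contact structures on solid tori with two dividing curves: such a structure is determined by a minimal path in the Farey graph from $\infty$ to the dividing slope $s$ together with a sign for each basic slice of the associated layered decomposition, and it is universally tight exactly when all of these signs agree. Since virtual overtwistedness forces at least two basic slices of opposite sign, the path must have length at least two; as the integers are precisely the Farey neighbors of $\infty$, this already gives $s\notin\Z$. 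Choosing the outermost sign change, it exhibits a convex torus $T_m\subset S$ of some dividing slope $m$ at which the sign flips, so that the solid torus $S_m$ it bounds is universally tight while the basic slice just outside $T_m$ carries the opposite sign.

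The heart of the argument is to convert this mixed-sign configuration into a \emph{Legendrian large cable}. The idea is that the opposite-sign basic slice adjacent to $T_m$ supplies a bypass which, when attached to a Legendrian $(p,q)$-cable realized as a ruling curve on a convex torus interpolating between slopes $m$ and $s$, raises its Thurston--Bennequin invariant above the surface framing $pq$; here $p/q$ is the Farey neighbor of $s$ produced by the sign change. This is exactly the phenomenon behind the constructions of McCullough \cite{McCullough23} and of Chakraborty, Min and the first author \cite{ChakrabortyEtnyreMin24}, run internally to $S$, and I expect this step --- producing an $L\in\mathcal{L}(K_{p,q})$ with $\tb(L)>pq$ from the abstract sign data --- to be the main obstacle, since one must show the bypass genuinely effects a framing increase on the cable and is not absorbed by a shuffling of the layers into a universally tight model.

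Once a Legendrian large cable is in hand, Theorem~\ref{llc} applies directly: it forces $K$ to have a Legendrian representative bounding a Lagrangian disk in $(B^4,\omega_{std})$, so $K$ is Lagrangian slice with $\tbb(K)=-1$ and $w(K)\in[-1,0]$, and it forces the cable to be of the form $(n,-1)$. Since a large cable requires $\tb(L)=-n+k>-n$ with $k<n$, we have $1\le k<n$, so $n\ge 2$ and the cabling slope $q/p=-1/n$ satisfies $-1/n\ge -1/2$.

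It remains to pin down the interval. Because $s\le w(K)\le 0$ and $s\neq 0$, we get $s<0$. For the lower bound I will use that the large $(n,-1)$-cable lives on the cabling torus of slope $-1/n$ and that our convex torus lies on the longitude side of it, forcing $s>-1/n\ge -1/2$ since $n\ge 2$. This is consistent with the Farey picture: $-1/2$ is the mediant of $0$ and $-1$, so the minimal path to $s$ has penultimate vertex $0$ exactly when $s\in(-1/2,0)$ and penultimate vertex $-1$ exactly when $s\in(-1,-1/2)$. The required sign change can be realized by an embedding into the tight $S^3$ only across the longitude $0$ --- the flexible side for a Lagrangian slice knot, where $\tbb(K)+1=0$ is approached in the limit --- and not across the rigid integer $-1=\tbb(K)$; this is what excludes virtually overtwisted models with $s\le -1/2$, where the sign change near $\tbb(K)$ cannot coexist with tightness of the complementary knot exterior. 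Combining the two bounds gives $s\in(-1/2,0)$, completing the contrapositive. The two places demanding the most care are the cable construction of the second paragraph and this final embedding constraint.
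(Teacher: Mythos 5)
Your opening step --- invoking the Giroux--Honda classification to get a signed Farey path, noting that virtual overtwistedness forces a sign change (so $s\notin\Z$), and extracting a mixed torus $T_m\subset S$ --- is exactly how the paper begins. But your second step is a genuine gap, and you flagged it yourself: you need to convert the mixed torus into a Legendrian large cable so that you can apply the \emph{statement} of Theorem~\ref{llc}. The result you lean on, Theorem~1.10 of \cite{ChakrabortyEtnyreMin24}, goes in the opposite direction: a cable with $\tb(L)=pq+k$ produces a continued fraction block with $k$ positive and $k$ negative basic slices around a torus of slope $q/p$. The converse is not automatic: a Legendrian divide on your mixed torus has twisting $0$ relative to the torus, so $\tb=pq$ on the nose, and there is no general mechanism by which the single adjacent opposite-sign basic slice pushes a curve isotopic to the cable above the surface framing --- that phenomenon requires the full continued-fraction-block structure with matching counts of signs, which a lone sign change need not provide. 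So the middle of your proof is missing, not merely sketchy.

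The fix is that the detour through large cables is unnecessary: in the proof of Theorem~\ref{llc}, the large cable is used \emph{only} to manufacture a mixed torus; everything afterwards --- the Christian--Menke splitting (Theorem~\ref{splitting}) of the filling $B^4$ along the mixed torus, connectedness of boundaries of symplectic fillings of lens spaces forcing the split filling to have two components, the homology/round-handle argument forcing the exceptional slope $e$ to be $0$ (hence the mixed slope is $1/n$), tightness ruling out $n>0$, and \cite{ConwayEtnyreTosun21} producing the Lagrangian disk --- applies verbatim to the mixed torus you already have inside $S$. This is precisely what the paper does: it ``repeats the argument in the proof of Theorem~\ref{llc}'' rather than citing its conclusion. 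Two smaller points: you should take the sign change \emph{closest to the meridian} (not the outermost one), since that is what guarantees the solid torus bounded by $T_m$ is universally tight, which is needed to locate the slope $-1$ standard neighborhood and identify one side of the splitting as contact $(+1)$-surgery on a Legendrian; and your closing ``embedding constraint'' discussion is not an argument (moreover, the claim that the minimal path to $s\in(-1/2,0)$ has penultimate vertex $0$ is false --- for $s=-2/5$ the path is $\infty,-1,-1/2,-2/5$). Neither is needed: once the sign-change slope is known to be $1/n$ with $n\le -2$ (integer slopes are excluded because the first edge of the path carries no sign), the clockwise ordering of the vertices of the path gives $s\in(1/n,0)\subseteq(-1/2,0)$ directly.
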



We now turn to the other way a knot type can fail to be uniformly thick, that is, when they admit non-thickenable tori. We currently have restricted families of examples of such tori. For example, Honda and the first author \cite{EtnyreHonda05} together with LaFountain, the first and the third author \cite{EtnyreLafountainTosun12} showed that positive torus knots admit non-thickenable tori.  This was improved by LaFountain \cite{LaFountain11} to show that all iterated positive cables of positive torus knots admit such tori. The only known examples outside of the standard tight contact structure on $S^3$ occur in overtwisted contact structures on $S^3$. More specifically, Min, Mukherjee, and the first author \cite{EtnyreMinMukherjeePre} showed that negative torus knots admit non-thickenable solid tori in some overtwisted contact structures; see below for more discussion of this. Here we give a large family of new examples in many contact manifolds. 
\begin{theorem}\label{non-thickenableex}
If $K$ is a genus 1 fibered knot in the manifold $M$ or a knot with trivial monodromy, and $\xi$ is the contact structure that $K$ supports, then the knot type $K$ admits non-thickenable tori in the contact structure $\xi$.
\end{theorem}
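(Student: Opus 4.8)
The plan is to realize $\xi$ from the fibration, build an explicit solid torus $S$ in the knot type of $K$ with convex boundary of small dividing slope, and then obstruct thickening by analyzing the contact geometry of the complement, which in both cases of the hypothesis is a surface bundle over $S^1$. Write $F$ for the fiber and $\phi$ for the monodromy, so that $\xi=\xi_K$ is supported by the open book $(F,\phi)$ with binding $K$. When $\phi$ is trivial the complement $M\setminus K$ is the product $F\times S^1$; when $K$ is a genus one fibered knot, $F$ is a once-punctured torus, $\phi$ lies in its mapping class group (a product of Dehn twists recording, for $S^3$, the right-handed trefoil, the left-handed trefoil, or the figure eight), and $M\setminus K$ is the corresponding once-punctured torus bundle $M_\phi$. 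In either case a solid torus in the knot type of $K$ is precisely the complement of such a mapping-torus region, and thickening it toward a standard neighborhood of a maximal Thurston--Bennequin representative amounts to increasing the dividing slope of the convex boundary from inside the complement. Thus the theorem reduces to exhibiting a convex torus in the knot type of $K$, of dividing slope strictly below that of a standard neighborhood of a maximal-$\tb$ Legendrian representative, whose slope cannot be increased within the complement.

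To produce the candidate I would start from a Thurston--Winkelnkemper contact form adapted to $(F,\phi)$ and realize the binding as a transverse knot together with a convex page $F$. Rounding the page against a neighborhood of the binding puts the boundary of that neighborhood into convex position with two dividing curves along the page (Seifert) framing; since this framing is strictly below that of a standard maximal-$\tb$ neighborhood, the resulting solid torus $S$ has dividing slope in the required range. If $\xi$ is overtwisted --- as it is for the left-handed trefoil and the figure eight --- I would arrange the analysis so that the part of the complement adjacent to $\partial S$ is tight, isolating any overtwisted disk away from $\partial S$, so that convex surface theory applies in a collar of $\partial S$.

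The heart of the argument is then to show that $S$ does not thicken, i.e. that no convex torus isotopic to $\partial S$ and lying outside $S$ has larger dividing slope. For this I would cut the complement along a convex page to obtain $F\times I$ (or, for trivial monodromy, directly analyze $F\times S^1$), where a tight contact structure is recorded by the dividing arcs on the two copies of $F$; the monodromy $\phi$ then imposes a gluing-consistency condition. A bypass that would raise the boundary slope must appear as an appropriate configuration of these dividing arcs, and I would show, using the incompressibility of the page together with the constraints coming from $\phi$, that no such configuration exists below the standard-neighborhood slope. Because $F$ is a once-punctured torus (or because the bundle is a product), the relevant mapping class group computations are explicit, and the signs of the Dehn twists in $\phi$ feed directly into the slope bound; this is exactly where the genus one and trivial monodromy hypotheses are used.

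The main obstacle is this last step, the non-thickenability: ruling out every potential thickening requires a complete enough understanding of tight contact structures on $F\times I$ and their behavior under the gluing $\phi$, and the overtwisted cases require care to localize the analysis near $\partial S$ so that the local, tight convex surface theory still applies. I expect the trivial monodromy case to be the cleanest, serving as a model, with the once-punctured torus bundles handled via the explicit $SL(2,\Z)$ structure of the monodromy.
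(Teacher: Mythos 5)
Your construction produces the wrong tori, and this is a fatal gap rather than a detail of execution. A solid torus cut out of the Thurston--Winkelnkemper model has complement carrying a torsion-free perturbation of the fibration by pages, and such a torus \emph{does} thicken: non-thickenability is not a property of the solid torus and its boundary slope alone, but of the contact structure on its complement. Concretely, in the genus-one case with $\xi_K$ tight, the minimal perturbation of the fibration is exactly what provides the standard neighborhood $S_1$ of a $\tb=1$ representative of $K$, and a torus of dividing slope near the page framing whose complement is that minimal perturbation sits inside (hence thickens to) $S_1$. The paper's non-thickenable tori of slope $(2g-1)/n$ have complements with Giroux torsion growing with $n$: they are built by passing to the $0$-surgery on $K$ (a closed torus bundle in the genus-one case), taking the explicit torsion contact structures $\xi_n'$ there, realizing the surgery-dual knot as a Legendrian with twisting $-n$, and surgering back. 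One must then prove the genuinely non-obvious fact that the resulting contact structure on $M$ is still $\xi_K$ even though torsion has been inserted in the knot complement; the paper does this by exhibiting $\xi_n'$ as a deformation of the fibration foliation and invoking Torisu's theorem \cite{Torisu00} on a Heegaard surface made of two pages. None of this is visible from inside the Thurston--Winkelnkemper model, so your candidate $S$ cannot serve, and the identification step has no counterpart in your outline.

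Second, your non-thickenability step is a restatement of the problem rather than an argument, as you acknowledge (``the main obstacle is this last step''). Classifying tight contact structures on $F\times I$ for a once-punctured torus $F$ and ruling out every bypass configuration compatible with the gluing by $\phi$ is essentially as hard as the theorem itself, and for your particular torus it must fail, since the thickening actually exists. The paper gets the obstruction from external twisting bounds instead: in the genus-one case, the Honda--Kazez--Mati\'c bound \cite{HondaKazezMatic02} that in a torus bundle with Giroux torsion $n-1$ any Legendrian isotopic to the fiber section has contact twisting at most $-n$, combined with an annulus/bypass argument and a finite-cover trick to arrange the comparison Legendrian disjoint from the hypothetical thickening; in the trivial-monodromy case, Honda's maximal-twisting results for Legendrian $S^1$-fibers in $\Sigma\times S^1$ \cite{Honda00b}. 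Without an input of this kind, the dividing-curve analysis you propose has no mechanism for producing a contradiction.
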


 An immediate consequence of this (see Corollary~\ref{genus1c} for the precise statement) will be that if $\xi_K$ is tight, then $\tbb(K)=1$ for any genus 1 fibered knot $K$. 

The theorem above inspires the following conjecture. 
\begin{conjecture}
A fibered knot $K$ in a manifold $M$ admits non-thickenable tori in a contact structure $\xi$ on $M$ if and only if $K$ supports $\xi$. 
\end{conjecture}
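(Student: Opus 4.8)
The plan is to prove the two implications separately; the forward direction (if $K$ supports $\xi$ then non-thickenable tori exist) should extend the techniques behind Theorem~\ref{non-thickenableex}, while the converse (non-thickenable tori occur only for the supported structure) is the genuinely new and harder half. For the forward direction, suppose $\xi = \xi_K$, so the complement $M \setminus \nu(K)$ is the mapping torus $\Sigma_\phi$ of the page $(\Sigma,\phi)$. Generalizing Theorem~\ref{non-thickenableex}, I would construct from the open book a solid torus $S$ representing $K$ whose convex boundary has dividing slope strictly below $\tbb(K)$, and then show it cannot be enlarged to a standard neighborhood of a Legendrian realizing $\tbb(K)$. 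The obstruction I would extract by making a page $\Sigma$ convex in $M \setminus S$: any thickening of $S$ produces a bypass attached along $\Sigma$, and I would argue that the dividing set $\Gamma_\Sigma$ coming from the compatible structure is $\bd$-parallel and admits no such bypass. For genus one and trivial monodromy this analysis is exactly Theorem~\ref{non-thickenableex}; the difficulty in general is controlling $\Gamma_\Sigma$ after an arbitrary monodromy $\phi$ acts, where bypass moves and the action of $\phi$ on arc systems are hard to track by hand.

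For the converse I would argue the contrapositive: if $\xi \neq \xi_K$, then every solid torus in the knot type $K$ thickens, so $K$ has no non-thickenable tori. Given such a solid torus $S$, I would again make a Seifert page $\Sigma$ convex in $M \setminus S$ and read off $\Gamma_\Sigma$. The heart of the argument is a reconstruction principle: if $S$ failed to thicken, then $\Gamma_\Sigma$ would have to be $\bd$-parallel with respect to the fibration, and a $\bd$-parallel dividing set on the page lets one rebuild an open book with binding $K$ that is compatible with $\xi$; by the Giroux correspondence this forces $\xi = \xi_K$, contradicting our assumption. Equivalently, when $\xi \neq \xi_K$ the dividing set $\Gamma_\Sigma$ must contain a curve not parallel to $\bd \Sigma$, and I would show each such curve supplies a bypass that enlarges $S$. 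Making this precise is the crux, since it amounts to a uniqueness statement for tight contact structures on the mapping-torus complement $\Sigma_\phi$ compatible with the boundary data of $S$.

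I expect the principal obstacle, for both directions, to be the absence of a classification of contact structures on the fibered complement $\Sigma_\phi$ for general $(\Sigma,\phi)$, together with the combinatorial explosion in tracking dividing sets under $\phi$. Accordingly, I would first settle the cases where the bypass analysis is finite---bindings of genus one, already in Theorem~\ref{non-thickenableex}, and then monodromies that are a single Dehn twist or periodic---and in parallel look for a Floer-theoretic invariant that detects the supported condition, using that the contact class $c(\xi_K)$ is the distinguished element of the fibered structure by Honda--Kazez--Mati\'{c}. An invariant that is non-zero exactly when $\xi = \xi_K$, together with a relative version obstructing thickening, would give both implications at once and is, I believe, the most promising route to the full conjecture.
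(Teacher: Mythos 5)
This statement is a conjecture in the paper, not a theorem: the authors prove it only in special cases (Theorem~\ref{non-thickenableex} for genus one fibered knots and trivial monodromy, plus the torus-knot and iterated-torus-knot cases they cite from \cite{EtnyreMinMukherjeePre} and \cite{LaFountain11, BakerEtnyreVanHorn-Morris12}), and your proposal, as you yourself acknowledge, is a research program rather than a proof. The genuine gaps are in both of your central claims. In the forward direction, your asserted obstruction --- that a thickening of $S$ yields a bypass along a convex page $\Sigma$, and that a $\bd$-parallel dividing set admits no such bypass --- is not how even the known cases are proved, and there is no argument for it in general. The paper's genus-one proof does not work page-by-page on $\Sigma$: it performs $0$-surgery on $K$ to pass to a torus bundle, and the non-thickenability comes from the Giroux torsion of the contact structures $\xi_n'$ via the Honda--Kazez--Mati\'{c} bound on the twisting of Legendrian knots isotopic to the surgery dual $L_n'$. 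For higher-genus pages the surgered manifold is a surface bundle with no analogous classification or torsion bound available, so your bypass obstruction is an unsupported assertion precisely where the conjecture is open.

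In the converse direction, your ``reconstruction principle'' --- non-thickenability forces $\Gamma_\Sigma$ to be $\bd$-parallel, which rebuilds a compatible open book and forces $\xi = \xi_K$ --- is the entire content of the hard half, and the step ``each curve of $\Gamma_\Sigma$ not parallel to $\bd\Sigma$ supplies a bypass that enlarges $S$'' would require a uniqueness statement for tight contact structures on the fibered complement $\Sigma_\phi$ with the given boundary data; no such statement exists, and you correctly identify this but do not supply a substitute. Two further points you do not address: first, the conjecture includes overtwisted $\xi$ (the paper notes torus knots admit non-thickenable tori in overtwisted structures they support), where the definition of non-thickenable must be modified as in the paper's parenthetical remark and where convex-surface arguments behave very differently since bypasses abound; second, the forthcoming work \cite{EtnyreLiMinPre} cited in the paper shows that non-thickenable tori can arise from essential tori in surgered manifolds, a mechanism orthogonal to supporting an open book, so for the converse you would also need to rule out such topological sources of non-thickenability in fibered complements. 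Your Floer-theoretic suggestion is plausible as motivation but currently no invariant is known to obstruct thickening, so it does not close any of these gaps.
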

We note work of Lafountain \cite{LaFountain11}, coupled with work of Baker, Van Horn-Morris, and the first author on cabling open books \cite{BakerEtnyreVanHorn-Morris12}, says an iterated torus knot has non-thickenable tori if and only if it supports the standard tight contact structure on $S^3$. Moreover, \cite{EtnyreMinMukherjeePre} shows that a torus knot admits a non-thickenable torus if and only if it is in the contact structure supported by that torus knot. (Here, we note that one needs to be careful about the definition of non-thickenable torus in overtwisted contact structures because there are many examples of a Legendrian knot that does not destabilize but does not have maximal Thurston-Bennequin invariant. So in this case, by non-thickenable, we mean that there is a solid torus with convex boundary having non-integral slope but is not contained in a solid torus with a different dividing slope.)

The discussion above is about non-thickenability of fibered knots, and the fact that they support a contact structure was essential. For non-fibered knots, we have no such consideration, but in upcoming work of Min and the first two authors \cite{EtnyreLiMinPre} it will be shown that if surgery on a knot $K$ yields a manifold with an incompressible torus non-trivially intersecting the surgery dual knot, then that knot type admits non-tickenable tori in some contact structure. 
So we ask the following.
\begin{question}
When do non-fibered knots never have non-thickenable tori?
\end{question}

We end with one question about which we know nothing.
\begin{question}
Is there a knot type $K$ admitting a non-thickenable torus with dividing slope larger than $\tbb(K)$?
\end{question}
Clearly from Theorem~\ref{llc} one might expect knot types $K$ with Legendrian large cables to have such tori, but it is unclear if the slope $w(K)$ is realized by a solid torus in the knot type or if there are other non-thickenable tori in this knot type.

\subsection*{Acknowledgments}
	The first author was partially supported by the National Science Foundation grant DMS-2203312 and the Georgia Institute of Technology Elaine M. Hubbard Distinguished Faculty Award. The second author was partially supported by Grants No. 12271349 of the National Natural Science Foundation of China. The third author was supported in part by grants from National Science Foundation (DMS-2105525 and CAREER DMS 2144363) and the Simons Foundation (636841, BT and 2023 Simons Fellowship). He also acknowledges the support by the Charles Simonyi Endowment at the Institute for Advanced Study where this project was started.

\section{Background}
We will be using standard facts about the Farey graph and from convex surface theory. These facts and the conventions used in this paper are discussed thoroughly in \cite{EtnyreRoy21}. For convenience, we recall some basic facts about the classification of contact structures on solid tori and the main result from \cite{ChristianMenkePre}.
\subsection{Contact structures on solid tori}
We recall the classification of tight contact structures on solid tori from  \cite{Giroux00, Honda00a}. We will describe solid tori as follows. Consider $T^2\times[0,1]$. Let $S_m$ be the quotient of $T^2\times [0,1]$ obtained by collapsing the leaves of a linear foliation on $T^2\times \{0\}$ of slope $m$. We say $S_m$ is a \dfn{solid torus with lower meridian $m$}. Similarly, $S^m$ is the quotient of $T^2\times [0,1]$ obtained by collapsing the leaves of a linear foliation on $T^2\times \{1\}$ of slope $m$ and we say $S^m$ is a \dfn{solid torus with upper meridian $m$}. Of course, both $S_m$ and $S^m$ are solid tori, but the first has its boundary oriented with the outward normal first, and the second has it oriented with the inward normal first. This distinction will be useful when considering contact structures. 

A path in the Farey graph is called a continued fraction block if after a change of coordinates the vertices in the path are $-1, -2, \ldots, -n$. See \cite{EtnyreRoy21} for other characterizations of continued faction block. It is not hard to see that any minimal path in the Farey graph can be broken into continued fraction blocks. 

Given slopes $m$ and $r$ we can consider a clockwise minimal path in the Farey graph from $m$ to $r$. We say this path is \dfn{partially decorated} if all the edges have a sign except the first edge. We say two partially decorated paths are the same up to \dfn{shuffling in continued fraction blocks} if the number of $+$ signs in each continued fraction bock is the same (this, of course, implies that the number of $-$ signs is the same too). 
\begin{theorem}[Giroux \cite{Giroux00}, Honda \cite{Honda00a}, 2000]\label{class}
Tight contact structure on the solid torus $S_m$ with lower meridian $m$ and convex boundary with two dividing curves of slope $r$ are in one-to-one correspondence with partially decorated paths in the Farey graph up to shuffling in continued fraction blocks. 

Moreover, a contact structure as above on $S_m$ is universally tight if and only if all the signs on the path describing the contact structure are the same. 
\end{theorem}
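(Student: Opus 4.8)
The plan is to reduce the classification to Honda's analysis of tight contact structures on the thickened torus $T^2\times[0,1]$ \cite{Honda00a} (see also \cite{EtnyreRoy21}) and then to account for the extra symmetry introduced by capping off one side of the solid torus with a meridian disk. First I would put $\partial S_m$ in convex position with two dividing curves of slope $r$ and arrange the Legendrian core of $S_m$ to have a standard convex neighborhood $N$; as this neighborhood shrinks toward the core, the dividing slope of $\partial N$ can be taken to be as close to $m$ (on the clockwise side) as desired. The complement $S_m\setminus N$ is then a thickened torus with convex boundary of slopes $r$ and (essentially) $m$, and the collapsing datum defining the lower meridian $m$ is precisely the information that the inner boundary bounds a disk.

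Next I would discretize $S_m\setminus N$: using the Legendrian realization principle and the Imbalance Principle, one produces a sequence of convex tori $T^2\times\{t_0\},\dots,T^2\times\{t_k\}$ interpolating from slope $r$ to slope $m$ so that each successive pair cobounds a \emph{basic slice}, i.e.\ a minimally twisting $T^2\times[0,1]$ whose two boundary slopes are joined by an edge of the Farey graph. Each such slice is obtained by a single bypass attachment, and the clockwise minimal path from $m$ to $r$ records exactly this sequence of edges; a non-minimal sequence of attachments is ruled out because it forces an overtwisted disk. The key local input is that a basic slice carries exactly two tight contact structures, distinguished by a sign (the sign of the attached bypass, equivalently the sign of the relative half-Euler class). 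This is the heart of the argument and the step I expect to be the main obstacle, since showing these two structures are tight, distinct, and exhaustive requires a careful bypass/state-transition analysis.

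With the basic-slice decomposition in hand, the two remaining combinatorial points are the shuffling relation and the undecorated first edge. For shuffling, I would invoke the fact that within a continued fraction block (slopes $-1,-2,\dots,-n$ after a change of coordinates) adjacent basic slices can be reordered by an isotopy, so that only the number of $+$ signs in each block is an invariant; this yields the equivalence ``up to shuffling in continued fraction blocks.'' For the first edge, the difference between $S_m$ and a genuine $T^2\times[0,1]$ is that the innermost layer abuts the solid-torus core: there is a diffeomorphism of $S_m$ fixing the boundary configuration that reverses the sign of the innermost basic slice (intuitively, a rotation realized on the convex meridian disk), so the two sign choices on the first edge give contactomorphic structures and that edge must be left undecorated. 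Combining these identifications with the basic-slice count produces the claimed bijection with partially decorated minimal paths modulo shuffling.

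Finally, for the universal tightness statement I would argue both directions via covering spaces. If all signs agree, the contact structure is isotopic to the restriction of a rotationally symmetric, universally tight model (for instance a piece of a standard neighborhood of a Legendrian curve, which is universally tight), hence it stays tight in every cover. Conversely, if two basic slices carry opposite signs, I would pass to a finite cyclic cover of $S_m$ unwinding the core; there the opposite-sign slices can be isotoped to share a boundary torus and combine into a configuration containing an overtwisted disk, showing the original structure is only virtually tight. The care needed in this last step is to ensure that the shuffling freedom does not prevent bringing opposite signs together, which again reduces to the continued-fraction-block analysis.
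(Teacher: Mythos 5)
This theorem is stated in the paper as cited background (it is Giroux's and Honda's classification; the paper gives no proof), so your attempt can only be measured against the standard proof, whose skeleton you do reproduce: Legendrian core, basic slice decomposition via bypasses, two tight structures per slice, shuffling, and covering spaces for universal tightness. But there are genuine gaps. The most serious is your treatment of the undecorated first edge. The first edge of the path does not correspond to a basic slice at all: it corresponds to the innermost piece of the decomposition, a solid torus whose boundary slope is Farey-adjacent to the meridian $m$, and it carries no sign because such a solid torus has a \emph{unique} tight structure up to isotopy (it is a standard neighborhood of a Legendrian core). That uniqueness --- proved by cutting along a convex meridian disk, which has a single dividing arc by tightness, and invoking Eliashberg's uniqueness of the tight ball --- is the base case of the whole classification and the only place the meridian/disk data genuinely enters; you never establish it. Your proposed mechanism, a diffeomorphism of $S_m$ reversing the sign of the innermost slice, proves the wrong thing: the flip $-\mathrm{id}$ on $T^2$ does extend over a standard neighborhood, but it does not fix the boundary, so it only shows the two structures are \emph{contactomorphic}, not isotopic rel boundary; applied to the actual innermost basic slice it would wrongly identify structures the theorem counts as distinct. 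Worse, if the first edge is treated as a sign-carrying slice subject to both your flip identification and shuffling, the count comes out wrong: for meridian $0$ and boundary slope $-3$ the path $0\to-1\to-2\to-3$ is a single continued fraction block, and flip-plus-shuffle would collapse all decorations to a single structure, whereas the correct count is three.

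There are two further gaps. First, your argument only bounds the classification from above (every tight structure yields a decorated path up to the stated moves); a bijection also requires distinctness --- that decorations differing in the number of $+$ signs in some block give non-isotopic structures, which needs an invariant such as the relative Euler class --- and existence, i.e.\ that every decoration is realized by a \emph{tight} structure on $S_m$ (gluing tight basic slices is not automatically tight; Honda realizes the models inside Stein fillable manifolds). Second, your converse for universal tightness misidentifies the mechanism: two adjacent basic slices of opposite sign do \emph{not} ``combine into a configuration containing an overtwisted disk'' --- such mixed tori occur inside $(S^3,\xi_{std})$ all the time, and indeed they are exactly what the proofs of Theorems~\ref{llc} and~\ref{vottori} in this paper exploit inside tight manifolds. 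Overtwistedness appears only after passing to a finite cover, because the covering changes the boundary slopes so that the lifted decorated path is no longer minimal; shortening a non-minimal path forces edges with opposite signs to merge (equivalently, the relative Euler class evaluations grow with the covering degree past the bounds satisfied by tight structures), and it is this, not a naive juxtaposition of opposite signs, that produces the overtwisted disk.
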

There is an analogous theorem for solid tori with upper meridian $m$ except that the path will run from $r$ clockwise to $m$ and all edges will have a sign except for the last edge.

\subsection{Splitting symplectic fillings}
Given a torus $T$ in a $3$-manifold $M$ with a choice of basis for $H_1(T)$ so that simple closed curves can be identified with rational numbers, we define the $s$-splitting of $M$ along $T$ to be the result of cutting $M$ along $T$ and gluing two solid tori to the resulting boundary components with each solid torus glued so that the meridian is glued to the curve of slope $s$. 

If we have a contact structure $\xi$ on $M$, we say a torus $T$ is a \dfn{mixed torus} if $T$ has a neighborhood $N=T^2\times [-1,1]$ such that $T=T^2\times\{0\}$ and $T^2\times[-1,0]$ and $T^2\times[0,1]$ are basic slices with opposite signs. Notice that to talk about the signs of the basic slices $T$ must be oriented and this orients $T^2\times[-1,0]$ and $T^2\times[0,1]$. Let $s_i$ be the dividing slope on $T^2\times\{i\}$ for $i=-1,0,1$ and $E_T$ be the set of vertices in the Farey graph in the interval $[s_1, s_{-1}]$ with an edge to $s_0$. For $e\in E_T$ we say an $e$-splitting of $(M,\xi)$ is obtained by gluing solid tori to $M\setminus T$ as above and extending the contact structure by the unique tight contact structure on these solid tori. (The torus $T$ is oriented so that we can discuss the signs on the basic slices and one of the boundary components of $M\setminus T$ will be oriented as $T$ was and the other will have the opposite orientation. To the first, we glue a solid torus with upper meridian $e$, and to the second, we glue a solid torus with lower meridian $e$. As these solid tori have unique tight contact structures, this distinction is not essential here, but in discussions below, it will be useful to keep in mind.) We note that each of these solid tori that is added to the $e$-splitting of $M$ is a standard neighborhood of a Legendrian knot (since the dividing curves on their boundary are longitudinal for their meridian). 

\begin{theorem}[Christian-Menke \cite{ChristianMenkePre}]\label{splitting}
If $X$ is an exact symplectic filling of $(M,\xi)$ and $T$ is a mixed torus in $(M,\xi)$, then there is a symplectic filling $X'$ of the $e$-splitting of $(M,\xi)$ such that $X$ is recovered by attaching a round $1$-handle to the Legendrian knots determined by the $e$-splitting. 
\end{theorem}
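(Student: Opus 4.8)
The plan is to prove this by symplectic field theory and neck-stretching along the mixed torus, following the strategy Menke introduced for Lagrangian tori and that Christian--Menke extend to arbitrary mixed tori. The essential content is the \emph{existence} of $X'$ as a genuine symplectic filling of the $e$-splitting, together with the handle decomposition; because this is a statement about fillings and not merely about the underlying smooth manifold, the splitting must be realized through honest holomorphic data rather than a soft topological cut-and-paste, so holomorphic curves are unavoidable.

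First I would normalize the contact geometry in the neighborhood $N=T^2\times[-1,1]$. Using the classification recalled in Theorem~\ref{class}, $N$ is a pair of basic slices of \emph{opposite} sign glued along $T=T^2\times\{0\}$, and I would choose a contact form making $T$ a Morse--Bott torus carrying an $S^1$-family of closed Reeb orbits along the dividing direction of slope $s_0$, with the two adjacent basic slices arranged so the Reeb flow is transverse and the action window controlled. The vertices $e\in E_T$ are exactly the admissible meridians of the solid tori one may glue in, i.e.\ the two Legendrian knots $L_\pm$ produced by the $e$-splitting, whose standard neighborhoods will sit at the two ends of the round handle. The crucial use of the hypothesis is that the \emph{opposite} signs of the basic slices --- the ``bypass from both sides'' at $T$ --- are precisely what forces the existence of the relevant holomorphic curves below; a non-mixed torus carries an SFT obstruction and admits no such splitting.

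Next I would complete $X$ to $\widehat X$, fix a compatible almost complex structure adapted to this contact form, and neck-stretch along $T$. The heart of the argument is the moduli space $\mathcal M$ of finite-energy holomorphic curves (cylinders or annuli) asymptotic to the Morse--Bott family on $T$. Via SFT compactness (ruling out unwanted breaking and bubbling), automatic transversality in this low-index four-dimensional setting, and positivity of intersections (forcing the curves to be embedded and mutually disjoint), I would show $\mathcal M$ is a compact $S^1$-family sweeping out a separating hypersurface $N_0\subset\widehat X$. The region of $\widehat X$ cobounded by $\partial X$ and $N_0$ is then identified with a symplectic round $1$-handle $S^1\times D^1\times D^2$ attached along the two solid-torus neighborhoods of $L_\pm$: the $S^1$ direction is the orbit family, and the $D^1\times D^2$ factors are swept by the holomorphic leaves.

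Finally I would cut $\widehat X$ along $N_0$ and cap the two resulting solid-torus ends with the standard tight pieces to produce $X'$; by construction $\partial X'$ is the $e$-splitting of $(M,\xi)$, and the Liouville/symplectic data restrict from $\widehat X$ so that $X'$ is again a (strong, indeed exact) filling. Reversing the cut reglues the handle region and exhibits $X$ as $X'$ with a round $1$-handle attached to $L_\pm$, as claimed. I expect the holomorphic-curve step to be the main obstacle: guaranteeing SFT compactness with no stray breaking, proving the foliation is regular and embedded so that the cobounded region is genuinely a round handle rather than a singular neighborhood, and --- most delicately --- verifying that the capped complement is an \emph{exact} (not merely weak) filling of the split manifold, independent of the auxiliary almost complex structure. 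Establishing this exactness and well-definedness is where essentially all of the technical work of \cite{ChristianMenkePre} resides.
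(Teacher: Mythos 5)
This statement is quoted background: the paper attributes it to Christian--Menke \cite{ChristianMenkePre} and gives no proof of it (it is used as a black box in the proofs of Theorem~\ref{llc} and Theorem~\ref{vottori}), so there is no in-paper argument to compare yours against; the only meaningful comparison is with the cited work itself.

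Measured against that, your sketch identifies the right toolbox (completing the filling, holomorphic curves near $T$, cutting along the region they sweep, recovering $X$ by a round $1$-handle), but it has a concrete gap: the exceptional slope $e$ never actually enters your construction. Your moduli space consists of curves asymptotic to a Morse--Bott family of Reeb orbits in the dividing direction $s_0$ of $T$, so nothing in your argument explains why the solid tori capping the cut-open manifold have meridian $e$ rather than $s_0$ --- that is, why $X'$ fills the $e$-splitting, which is the whole content of the statement. In the actual arguments of Menke and Christian--Menke, $e$ enters at the very beginning: the condition $e\in E_T$ (a Farey edge from $e$ to $s_0$, with $e\in[s_1,s_{-1}]$) guarantees a pre-Lagrangian torus of slope $e$ inside the neighborhood $N$, which lifts to a Lagrangian torus in the completion of $X$; the curves studied are compact holomorphic \emph{annuli with boundary on that Lagrangian torus}, not punctured finite-energy curves with Reeb asymptotics, and it is the boundary slope $e$ of these annuli that dictates the meridians of the glued solid tori and hence the splitting slope. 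Relatedly, your claim that ``a non-mixed torus carries an SFT obstruction'' is an assertion, not an argument; the opposite signs of the two basic slices are used inside the moduli-space analysis (degeneration and orientation of the annulus family), and that is where the hypothesis does its work. Since your outline defers all of the compactness, transversality, and exactness analysis and lacks the mechanism by which $e$ appears, it is a plausible plan in the right genre but would not close up into a proof as written.
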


Recall that a round handle attached to $X$ along knots $K_1$ and $K_2$ is the result of attaching a $1$-handle with attaching sphere a point on $K_1$ and a point on $K_2$ and then attaching a $2$-handle to $K$ where $K$ is obtained by connected summing $K_1$ and $K_2$ in the $3$-manifold boundary of the result of the $1$-handle attachment. One must specify framings on $K_1$ and $K_2$ that induce a framing on $K$. In the contact setting, the $K_i$ have contact framings, and $K$ is the connected sum of Legendrian knots and has a natural framing, and the handle is attached as a Stein $2$-handle to the Legendrian knot $K$. 

\section{Width of knots}
In this section, we prove our two theorems about the width of certain knots. We begin with the result stating that $w(K)=\tbb(K)$ if $K$ is an $L$-space knot with $\tbb(K)=g(K)-1$.  
\begin{proof}[Proof of Theorem~\ref{lspace}]
Let $K$ be an $L$-space knot with $\tbb(K)=2g(K)-1$. Suppose that $w(K)>\tbb(K)$. Then there will be a solid torus with convex boundary having dividing slope $p/q$ for some $p/q\in (\tbb(K),\tbb(K)+1)$. We can assume that the boundary of this torus is in standard form and let $L$ be a Legendrian divide on the torus. Clearly, $L$ is a Legendrian knot in the knot type of the $(q,p)$-cable of $K$. The contact framing on $L$ agrees with the framing coming from the torus, and this framing differs from the Seifert framing by $pq$. Thus $\tb(L)=pq$. Let $M$ be the result of Legendrian surgery on $L$. One can build a Stein filling $X$ of $M$ by attaching a Stein $2$-handle to $B^4$ along $L$. It is well-known, see \cite[Corollary~7.3]{Gordon83}, that smoothly $M$ is also the result of $(pq-1)/q^2$ surgery on $K$. 

We note that the result of smooth $r$ surgery on $K$ will be an $L$-space if $r\geq 2g(K)-1$, \cite{Hedden10, LiscaStipsicz04}. Moreover, any symplectic filling of an $L$-space must be negative definite, \cite{OzsvathSzabo04a}.  We claim that  $(pq-1)/q^2>2g(K)-1$. Indeed, if $p/q\in (n,n+1)$ then $p=nq+r$ for $1\leq r<q$. Now one may easily check that $r/q-1/q^2\in (0,1)$ from which we see that $(pq-1)/q^2=n + r/q - 1/q^2$ and hence is also in $(n,n+1)$. Since $(pq-1)/q^2>2g(K)-1$ we see that $M$ is an $L$-space and thus any symplectic filling must be negative definitely; however, $X$ is positive definite since $pq-1>0$. This contradiction proves that $w(K)=\tbb(K)$. 
\end{proof}

We now prove Theorem~\ref{cableswetbb} that claims the width equals the maximal Thurston-Bennequin invariant for cables of non-Lagrangian slice knots if the cable coefficient is sufficiently negative. We note that this proof is modeled on the proof of Theorem~1.2 in \cite{EtnyreHonda05}.

\begin{proof}[Proof of Theorem~\ref{cableswetbb}]
Suppose $p/q<w(K)$ and $K$ is not Lagrangian slice. By hypothesis, there is a solid torus in the knot type $K$ with convex boundary having dividing slope larger than $q/p$, and by the classification of tight contact structures on solid tori, we know that inside of this neighborhood, we can find a convex torus with dividing slope $q/p$. Let $L$ be a Legendrian divide on this torus. This is smoothly a $(p,q)$-cable of $K$, and the contact planes have twisting $0$ relative to the torus, thus $\tb(L)=pq$. We will prove that the width of $K_{p,q}$ is also $pq$. This will establish that $\tbb(K_{p,q})=pq$. 

Arguing by contradiction, we assume that there is a solid torus $S$ in the knot type $K_{p,q}$ with convex boundary having dividing slope larger than $pq$. From now on we will use coordinate on $\partial S$ where the longitude comes from the cabling torus. In these coordinates, the slope of the dividing curves on $\partial S$ are larger than $0$. By shrinking $S$ if necessary, we can assume that $\partial S$ has $2$ dividing curves of slope $1/n$ for some large positive integer $n$. 

Let $A$ be an annulus in the complement of $S$ such that the complement of $S$ together with a neighborhood of $A$ is a solid torus $N(K)$ in the knot type of $K$. We can make $A$ convex with boundary ruling curves on $\partial S$. Notice that there are two dividing curves on $A$ and they must run from one boundary component to the other, since if not we could Legendrian realize a core curve on $A$ that was disjoint from the dividing curves and then expand $S$ to $S'$ such that $\partial S'$ contained this Legendrian. This solid torus $S'$ can be made to have convex boundary, and its dividing slope must be $0$. This implies that in $S'\setminus S$, there is a convex torus of slope $\infty$, but a Legendrian divide on this torus bounds a disk in $S$, which implies that the contact structure is overtwisted. Thus, the dividing curves on $A$ must go from one boundary component to the other. 

Consider $S\cup (A\times [-1,1])$ where the contact structure on $A\times [-1,1]$ is invariant under translation in the $[-1,1]$-direction. After rounding corners, we note that this is diffeomorphic to $T^2\times [-1,1]$, and we will use this to denote this manifold. Let $T_i$ denote $T^2\times \{i\}$ with which we consider $T_1=\partial N(K)$ and $T_{-1}\subset N(K)$. Notice that, since the slope of the dividing curve on $S$ is $\frac{1}{n}$, we obtain that the slope of the dividing curves on $T_{-1}$ is obtained by performing $n+1$ negative Dehn twists to the dividing curves on $T_1$. So, we can choose coordinates on $T^2$ so that the dividing slope on $T_1$ is $1$ and on $T_{-1}$ is $-1/n$. By the classification of tight (minimally twisting) contact structures on thickened tori, we know we can find a convex torus $T_0$ in $T^2\times [-1,1]$ that has dividing slope $0$ and this splits $T^2\times [-1,1]$ into two basic slices $T^2\times[-1,0]$ and $T^2\times [0,1]$ each of which have two possible contact structures, this fact and the following statements can be found in \cite{Honda00a}. The possible relative Euler classes for $T^2\times[-1,0]$ are $\pm \begin{bmatrix}1-n\\ 1\end{bmatrix}$ and for $T^2\times [0,1]$ are $\pm \begin{bmatrix}0\\ 1\end{bmatrix}$. So the relative Euler class of $T^2\times [-1,1]$ is simply the sum of two of these. We know that evaluating the relative Euler class on an annulus $A'$ that is a convex annulus in $A\times[-1,1]$ obtained as a core circle cross $[-1,1]$ is simply $\chi(A'_+)-\chi(A'_-)$ where $A'_\pm$ are the $\pm$ components of the complement of the dividing set on $A'$ and $\chi$ denotes Euler characteristic. Thus, the relative Euler class of $T^2\times [-1,1]$ evaluated on $A'$ is $0$, and hence the basic slices have to have opposite signs. This implies that the contact structure on $T^2\times [-1,1]$ is virtually overtwisted, but that would further imply that $K$ can be realized by a solid torus with a virtually overtwisted contact structure on it.  But this is ruled out by Theorem~\ref{vottori} since $K$ is not Lagrangian slice. Thus $w(K)=\tbb(K)$. 

If $K$ is Lagrangian slice, we obtain the same conclusion unless $q/p\in(-1/2,0)$. This is because the slope of torus with a virtually overtwisted contact structure at the end of the previous paragraph has an edge in the Farey graph to $q/p$ and so again we get a contradiction to Theorem~\ref{vottori}. 
\end{proof}

We now turn to the result concerning knot types that admit Legendrian large cables and show that such knots must be Lagrangian slice, the cable slope of the large cable must be $-\frac 1n$, and the width of the knot is larger than the maximum Thurston-Bennequin number $\tbb$. 

\begin{proof}[Proof of Theorem~\ref{llc}]
Suppose that $K$ is a knot type such that there is a Legendrian representative $L$ in $\mathcal{L}(K_{p,q})$ that has $\tb(L)=pq+k$ for some $k>0$. Then according to Theorem~1.10 in \cite{ChakrabortyEtnyreMin24} there is a continued fraction block of length $2k$ such that there are $k$ positive basic slices and $k$ negative basic slices and the slope on the middle torus is $q/p$. Thus we have a mixed torus $T$ of slope $q/p$. 

Let $e$ be an exceptional slope for $q/p$, that is $e$ is an element of $E_T$. The torus $T$ cuts $S^3$ into a solid torus $S$ and $S^3_K$, the complement of a neighborhood of $K$ in $S^3$. Thus the $e$-splitting of $S^3$ along $T$ will result in $S$ with a solid torus with meridional slope $e$ attached and $S^3_K$ with a solid torus of slope $e$-attached. The first is a lens space and the second is $e$ Dehn surgery on $K$, denoted $S^3_K(e)$. Since we know that any symplectic filling of a lens space has a connected boundary \cite{Etnyre04b,Schonenberger2007} we know that the symplectic filling $X'$ of the $e$-splitting of $M$ along $T$ coming from Theorem~\ref{splitting} has two components $X_1$ and $X_2$ with $X_1$ filling the lens space. Now the filling $B^4$ of $S^3$ is recovered from $X_1\cup X_2$ by attaching a round $1$-handle. Thus one of $X_i$ must be a homology ball and the other must be a homology $S^1\times D^3$. Since lens spaces do not have the latter filling \cite{ChristianLi23, Lisca08, EtnyreRoy21} $S^3_K(e)$ must be a homology $S^1\times D^3$ and hence $S^3_K(e)$ must be a homology $S^1\times S^2$. From this we know that $e$ must be $0$. Denote $\partial X_i$ by $M_i$. 

Since $e=0$ and $e$ has an edge in the Farey graph to $q/p$ we see that $q/p=1/n$ for some $n$. Notice if $n$ is positive, then the contact structure on $M_1$ is obtained by gluing a solid torus with lower meridian $\infty$ and dividing slope $1/n$ to a solid torus with upper meridian $0$ and dividing slope $1/n$. In the latter torus we can find a convex torus with dividing slope $\infty$ and thus a Legendrian divide on this torus will bound an overtwisted disk. Since $M_1$ is symplectically fillable, we cannot have $n$ positive. 

Now considering negative $n$, let $S'$ denote the solid torus that $T$ bounds. Since the slope of $T$ is in a continued fraction block the other tori in this block must have slope $1/m$, and since you can shuffle basic slices in the continued fraction block we can assume that all the basic slices between $T$ and the torus of slope $-1$ have the same sign. Thus the contact structure on $S'$ is universally tight. Further, let $S''$ be the solid torus in $S'$ that has convex boundary with dividing slope $-1$. Notice that $S''$ is a standard neighborhood of a Legendrian knot $L$ in the knot type of $K$. One of the components of the $e$-splitting of $S^3$ along $T$ is obtained by removing $S''$ from $S^3$ and gluing in a solid torus with tight contact structure having meridian $0$. We note that this is equivalent to removing the solid torus $S''$ and gluing in a solid torus with tight contact structure having meridian $0$. That is $M_2$ is obtained from $S^3$ by contact $(+1)$ surgery on $L$. Now from \cite{ConwayEtnyreTosun21} we know that $L$ must be Lagrangian slice and we have already seen that $q/p=1/n$. Thus completing the proof of Items~(1) and~(2) in the theorem. 

We are left to see that the width of $K$ satisfies $w(K)\geq -\frac{1}{n+k}$, but this is clear since $T$ is the center torus in a continued fraction block of length $2k$. 
\end{proof}

We now prove the estimate on the lower bound of the maximal Thurston-Bennequin invariant of a fibered knot $K$ that says if $K$ is a non-trivial fibered knot in a manifold $M$ and supports the contact structure $\xi_K$, then $\tbb(K)\geq 0$. 

\begin{proof}[Proof of Theorem~\ref{lowerbound}]
Let $\Sigma$ be the union of the closures of two fibers in the fibration of $M-K$. From the construction of $\xi_K$, it is clear that $\Sigma$ is convex with dividing set $K$. Because $K$ is non-trivial, the fiber of the fibration is of positive genus, so one may use the Legendrian realization principle \cite{Honda00a} to realize a non-separating simple closed curve $C$ on $\Sigma$ as a Legendrian curve in the complement of $K$. Then, using a local model for this Legendrian curve one can create a new convex surface $\Sigma'$ (agreeing with $\Sigma$ outside a neighborhood of the Legendrian curve) with dividing set $K$ union two parallel copies of $C$. Now we can realize a copy of $K$ as a Legendrian curve in the characteristic foliation of $\Sigma'$ that is disjoint from the dividing set. This implies that its Thurston-Bennequin invariant is $0$. 
\end{proof}

\section{Virtually overtwisted solid tori}
This section is devoted to the proof that the standard contact structure on $S^3$ restricted to a solid torus in the knot type $K$ will be universally tight unless $K$ is Lagrangian slice and the slope of the dividing curves is in $(-1/2,0)$. 
\begin{proof}[Proof of Theorem~\ref{vottori}]
Suppose $S$ is a solid torus with convex boundary having dividing slope $s$ in $(S^3,\xi_{std})$ in the knot type $K$ that is virtually overtwisted. We know from Theorem~\ref{class} that $\xi_{std}$ restricted to $S$ will be given by a minimal path in the Farey graph from $\infty$ clockwise to $s$ with all edges decorated by a $\pm$ except for the first edge. Since the contact structure is virtually overtwisted, some of the signs must be different in the path. Suppose $r$ is the slope of a torus between a $+$ edge and a $-$ edge. There will be a mixed torus $T$ in $S$ with slope $r$. We will choose $r$ to be the first place, moving clockwise from $\infty$, where the signs on the edges change. 

We can repeat the argument in the proof of Theorem~\ref{llc} above to see that $r=1/n<0$ and $K$ admits a Legendrian representative that is Lagrangian slice.

Finally, since $r=1/n$ with $n\leq -2$ (note $-1$ would make $r$ an integer and there can be no changes in the signs on the path in the Farey graph at $-1$ since the edge from $\infty$ to $-1$ will not have a sign), and we know that $s>r$ and hence $s\in (-1/2,0)$. 
\end{proof}

\section{Non-thickenable tori}
In this section we consider non-thickenable tori and prove Theorem~\ref{non-thickenableex} that states for genus one open books, or open books with trivial monodromy, there are non-thickenable tori in the knot type of the binding contained in the contact structure supported by the open book. The proof will be broken into two cases. We start with genus 1 fibered knots.

\begin{proposition}\label{genus1}
Let $K$ be a genus-$1$ fibered knot in a closed $3$-manifold $M$ and $\xi_K$ be the contact structure it supports. Then there are non-thickenable solid tori $S_n^\pm$ in $(M,\xi)$ for $n\geq 2$ and $S_1$ with convex boundary having two dividing curves of slope $1/n$. 
\end{proposition}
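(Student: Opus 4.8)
The plan is to construct the tori $S_n^{\pm}$ and $S_1$ explicitly inside the open book and then obstruct their thickening by analysing the induced contact structure on the complement. Throughout I would identify $M\setminus N(K)$ with the mapping torus of the monodromy $\phi$ of the once-punctured torus page $F$, and fix coordinates on $\partial N(K)$ in which the page boundary (equivalently, since $K$ is fibered, the Seifert longitude) has slope $0$ and the meridian has slope $\infty$; in these coordinates a standard neighborhood of a maximal Thurston--Bennequin Legendrian representative has dividing slope $\tbb(K)=2g(K)-1=1$.

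For the construction I would start from such a standard neighborhood $N(L_0)$ of slope $1$ and attach a sequence of like-signed bypasses coming from the convex pages (using that the doubled page $\Sigma$ is convex with dividing set exactly $K$, as in the proof of Theorem~\ref{lowerbound}) to produce, for each $n$, a solid torus with core $K$ and convex boundary carrying two dividing curves of slope $1/n$. Demanding that all the bypasses share a common sign makes the resulting solid torus universally tight, and the two possible signs give the pair $S_n^{\pm}$; by Theorem~\ref{class} these are exactly the two universally tight solid tori of slope $1/n$. At $n=1$ the relevant clockwise path in the Farey graph from $\infty$ to $1$ consists of the single unsigned first edge, so the sign ambiguity disappears and one obtains the single torus $S_1=N(L_0)$. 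I expect this construction step to be routine convex surface theory once the bypasses are located on the pages; the trivial-monodromy case, where $M\setminus N(K)=F\times S^1$, should be written down by hand and used as a model.

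The crux is non-thickenability. Suppose some $S_n$ ($n\ge 2$) were contained in a solid torus $S'$ in the knot type of $K$ with a different, hence larger, dividing slope; then $S'\setminus S_n$ would be a tight thickened torus interpolating the dividing slope from $1/n$ toward $1$ and lying in the mapping torus $M\setminus S_n$. I would show the supported contact structure on this bundle admits no such rotative layer, by following the first-sign-change strategy of the proofs of Theorems~\ref{llc} and~\ref{cableswetbb}: a convex torus of intermediate slope must meet $\Sigma$, and the monodromy forces the accompanying basic slice to carry the opposite sign, producing a mixed torus of slope $1/m$. Cutting along the annulus back to $\partial N(K)$ and computing the relative Euler class exactly as in the proof of Theorem~\ref{cableswetbb} then yields a convex torus of slope $\infty$ whose Legendrian divide bounds a meridian disk of $S_n$, i.e.\ an overtwisted disk. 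Since thickening into a standard neighborhood of a maximal Thurston--Bennequin Legendrian forces $S'$, and hence the interpolating region, to be tight (a standard neighborhood of a Legendrian knot is tight), this is a contradiction; for the refined notion of non-thickenability in overtwisted ambient structures one instead invokes Theorem~\ref{class} to see directly that no tight interpolating layer of the required slopes embeds, which is what lets the argument cover cases such as the figure-eight knot. The case of $S_1$ reduces to showing no solid torus of slope greater than $1$ exists, i.e.\ $w(K)=\tbb(K)$, which I would obtain from the same mechanism applied to a hypothetical torus of slope in $(1,2)$.

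The main obstacle I anticipate is precisely the middle step of the non-thickenability argument: controlling all convex tori parallel to $\partial S_n$ in the once-punctured torus bundle and proving that the monodromy genuinely forces the opposing sign, rather than permitting a universally tight rotative thickening. This is where the genus-$1$ hypothesis is essential --- the dividing set of $\Sigma$ is a single curve and the page carries a single handle, so the convex annuli between $\partial S_n$ and a core curve of the page are highly constrained --- and it is the one place where the trivial-monodromy model must be treated separately before reducing the general genus-$1$ case to it by a bypass/state-traversal argument along the pages.
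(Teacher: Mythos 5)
There is a fatal gap at the very first step: your construction is self-defeating. Any solid torus of slope $1/n$ that you produce by shrinking a standard neighborhood $N(L_0)$ of a maximal Thurston--Bennequin representative (equivalently, by digging bypasses inward from $\partial N(L_0)$) is, by construction, contained in $N(L_0)$, which is a solid torus in the knot type $K$ with dividing slope $1\neq 1/n$. Such a torus is therefore thickenable by definition. The point you are missing is that non-thickenability is a property of the \emph{embedding} of the solid torus in $(M,\xi_K)$, not of its abstract contact structure: both universally tight solid tori of slope $1/n$ furnished by Theorem~\ref{class} do embed inside $N(L_0)$ with core isotopic to $K$, and those copies all thicken. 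The non-thickenable tori of Proposition~\ref{genus1} are abstractly these same contact solid tori, but embedded so that their \emph{complement} contains Giroux torsion. This is why the paper's construction goes outside $M$: it performs $0$-surgery on $K$ to obtain a torus bundle $Y$, equips $Y$ with the contact structures $\xi'_n$ of Giroux torsion $n-1$ (contact forms $f_n\,d\theta+g_n\,d\phi$ winding extra times around the origin), realizes the surgery-dual knot as a Legendrian $L'_n$ of twisting $-n$, and glues solid tori $S_n^\pm$ back along $L'_n$ to recover $M$; one then needs a genuine argument (a deformation of the taut fibration together with Torisu's criterion \cite{Torisu00}) to identify the resulting contact structure with $\xi_K$. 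That identification step has no analogue in your proposal precisely because your tori sit inside $N(L_0)$ from the start. (Note also that your starting point already assumes what is partly the conclusion: the existence of a representative with $\tb=2g(K)-1=1$ is obtained in the paper as a consequence of the proposition, via Corollary~\ref{genus1c}, not assumed.)

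Your non-thickenability argument cannot be repaired independently of the construction, because no ``embedding-blind'' argument can work: thickenable slope-$1/n$ tori genuinely exist in $(M,\xi_K)$, so any proof must use a feature that distinguishes the exotic embedding. Concretely, a thickening of a slope-$1/n$ torus to a torus of slope in $(1/n,1]$ produces an interpolating thickened torus whose dividing slopes sweep out the arc of the Farey graph from $1/n$ to that slope \emph{not} containing $\infty$; no convex torus of meridional slope ever appears, so no overtwisted disk and no contradiction can be extracted by the relative Euler class mechanism of Theorems~\ref{cableswetbb} and~\ref{llc}. Moreover, the sign-change/mixed-torus mechanism is the wrong obstruction here: the relevant solid tori and layers are universally tight, and nothing about the monodromy ``forces opposite signs.'' What the paper uses instead is exactly the torsion hidden in the complement: if $S_n^\pm$ thickened in $(M,\xi_K)$, then surgering back to $(Y,\xi'_n)$ and running an annulus--bypass argument (carried out in a finite cover of $Y$ unwrapping the fiber, to guarantee a Legendrian representative disjoint from the thickened torus) would produce a Legendrian isotopic to $L'_n$ with contact twisting $-n+1$, contradicting the bound of \cite{HondaKazezMatic02} that in a contact structure of Giroux torsion $n-1$ such Legendrians have twisting at most $-n$. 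This use of torsion is the essential idea of the proof, and it is absent from your proposal.
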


\begin{corollary}\label{genus1c}
With the notation from Proposition~\ref{genus1}
We note that $S_1$ is a standard neighborhood of a Legendrian knot $L$ in the knot type $K$ with $\tb(L)=1$ and $\rot(L)=0$. In particular, $\tbb(K)=1$ in $\xi_K$ for all fibered knots $K$, if $\xi_K$ is tight. 

Moreover, if $\xi$ is a tight contact structure on $M$ not isotopic to $\xi_K$ then $\tbb(K)\leq 0$ in $\xi$ and if $M=S^3$ or $-K$ is isotopic to $K$ then $\tbb(K)\leq -1$. 
\end{corollary}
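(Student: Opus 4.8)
The plan is to read the two classical invariants off the torus $S_1$ produced in Proposition~\ref{genus1}, and then to combine the resulting lower bound on $\tbb$ with the Eliashberg--Bennequin inequality and with the characterization of when a fibered knot realizes its self-linking bound.

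First I would identify $S_1$. By construction it is a solid torus with convex boundary carrying exactly two dividing curves whose slope is $1/1$ in the fiber-framing coordinates fixed in Proposition~\ref{genus1}; by definition this exhibits $S_1$ as a standard neighborhood of a Legendrian representative $L$ of $K$, and since in these coordinates the dividing slope is the contact framing measured against the Seifert (fiber) framing, $\tb(L)=1$. The vanishing $\rot(L)=0$ I would extract from the relative Euler class of the universally tight model on $S_1$ built in the proposition, evaluating it on the meridional disc of the complementary solid torus; I expect the construction to be symmetric enough that this evaluation is zero. Granting this, $\tbb(K)\geq\tb(L)=1$. For the opposite inequality I would invoke the Eliashberg--Bennequin inequality in the tight manifold $(M,\xi_K)$: every Legendrian representative $L'$ of the genus-one knot $K$ obeys $\tb(L')+|\rot(L')|\leq 2g(K)-1=1$, whence $\tbb(K)\leq 1$. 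Thus $\tbb(K)=1$ whenever $\xi_K$ is tight, realized by $L$.

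For a tight contact structure $\xi$ not isotopic to $\xi_K$ I would argue by contraposition. Suppose $\tbb(K)\geq 1$ in $\xi$. The Bennequin bound $\tb+|\rot|\leq 1$ then forces a maximizing representative $L$ to have $\tb(L)=1$ and $\rot(L)=0$, so its positive transverse push-off $T$ has self-linking $\self(T)=\tb(L)-\rot(L)=1=2g(K)-1$; that is, the fibered knot $K$ realizes its maximal self-linking number in $(M,\xi)$. By the theorem of Etnyre and Van Horn-Morris on fibered transverse knots and the Bennequin bound, a fibered knot can attain $\self=2g-1$ in a tight contact structure only when that structure is the one the knot supports, so $\xi=\xi_K$. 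This contradiction shows $\tbb(K)\leq 0$ for every tight $\xi$ not isotopic to $\xi_K$.

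To sharpen this to $\tbb(K)\leq -1$ under the extra hypotheses I would rule out $\tbb(K)=0$. Since every closed oriented $3$-manifold is parallelizable, the self-linking number of a null-homologous transverse knot has the parity of $2g-1$ and is therefore odd; hence $\tb+\rot$ is odd and a representative with $(\tb,\rot)=(0,0)$ cannot occur, so a representative $L$ with $\tb(L)=0$ has $\rot(L)=\pm 1$. If $\rot(L)=-1$, the positive push-off of $L$ already gives a transverse representative of $K$ with $\self=1=2g-1$, forcing $\xi=\xi_K$ as above, a contradiction valid for any $M$. The case $\rot(L)=+1$ is exactly where the hypotheses enter. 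If $-K$ is isotopic to $K$, reversing the orientation of $L$ produces a representative of $K$ with $(\tb,\rot)=(0,-1)$, returning us to the previous case. If instead $M=S^3$, the positive push-off of the reversed knot $-L$ is a transverse representative of $-K$ with $\self=1$, so $-K$ supports $\xi_{std}$; since reversing the binding conjugates the supported structure and $\overline{\xi_{std}}$ is isotopic to $\xi_{std}$, this again yields $\xi_K=\xi_{std}=\xi$, a contradiction. In all cases $\tbb(K)=0$ is impossible, so $\tbb(K)\leq -1$.

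The main obstacle is the external input used in the last two paragraphs, namely that a fibered knot realizing the self-linking bound $2g-1$ in a tight contact structure must support that structure; getting the precise hypotheses of the Etnyre--Van Horn-Morris result to match the situation here (the tightness of $\xi$ and the behaviour under reversing the binding orientation) is where the care lies. Everything else---reading $\tb$ and $\rot$ off $S_1$, the Bennequin bound, the parallelizability parity of $\self$, and the push-off computations distinguishing the $S^3$ and the $-K$-isotopic-to-$K$ cases---is routine once that characterization is in hand.
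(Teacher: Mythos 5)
Your proof is correct and follows the same skeleton as the paper's: read $\tb(L)=1$ off the dividing slope of $S_1$, get the upper bound $\tbb(K)\leq 2g(K)-1=1$ from the Eliashberg--Bennequin inequality in the tight structure, and, for a tight $\xi$ not isotopic to $\xi_K$, rule out $\tb=1$ and then $(\tb,\rot)=(0,-1)$ by applying the Etnyre--Van Horn-Morris theorem to transverse push-offs, using orientation reversal of $L$ when $-K$ is isotopic to $K$. Two places genuinely differ. First, where the paper simply asserts that the Bennequin inequality forces $\rot(L)=\pm 1$ when $\tb(L)=0$, you correctly observe that Bennequin alone only gives $|\rot(L)|\leq 1$, and you supply the missing parity fact (the self-linking number of a null-homologous transverse knot is odd, so $(\tb,\rot)=(0,0)$ cannot occur); this fills a small gap that the paper elides. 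Second, for $M=S^3$ the paper invokes the well-known symmetry of the classical invariants in $(S^3,\xi_{std})$ --- a Legendrian representative with rotation number $r$ yields one with the same $\tb$ and rotation number $-r$ --- to reduce the case $\rot(L)=+1$ to the case $\rot(L)=-1$; you instead apply Etnyre--Van Horn-Morris to $-K$ itself, using that reversing the binding and page orientations of an open book produces an open book of the same oriented manifold supporting the coorientation-reversed contact structure, together with the fact that $-\xi_{std}$ is isotopic to $\xi_{std}$. Both mechanisms are valid; the paper's is the more standard citation, while yours trades it for the (correct) open-book conjugation argument. One cosmetic point: your proposed computation of $\rot(L)=0$ via a relative Euler class evaluation is hedged and unnecessary --- as in the paper, once $\tb(L)=1=2g(K)-1$, tightness of $\xi_K$ and the Bennequin inequality force $\rot(L)=0$ directly.
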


\begin{proof}
It is clear that $S_1$ in Proposition~\ref{genus1} is a standard neighborhood of a Legendrian knot $L$ with the claimed properties (by the Bennequin inequality). 

Now if $\xi$ is a tight contact structure on $M$ not isotopic to $\xi_K$ then we note that $K$ cannot admit a Legendrian representative $L$ with $\tb(L)=1$ since its transverse push-off would have $\self=2g(K)-1$ and the main result in \cite{EtnyreVanHornMorris10} would imply that $K$ supports $\xi$. Now suppose that there is a Legendrian knot $L$ with $\tb(L)=0$. The Bennequin inequality implies that $\rot(L)=\pm 1$. If it is $-1$, then the transverse push-off of $L$ would again have $\self=2g(K)-1$, leading to the same contradiction as above. If $\rot(L)=1$ and $-K$ is isotopic to $K$ then $\rot(-L)=-1$ and the transverse push-off of $-L$ will lead to the same contradiction. Finally, in the standard tight contact structure on $S^3$ it is well-known that if $K$ has a Legendrian knot $L$ with rotation number $r$ then there is a Legendrian knot with the same Thurston-Bennequin invariant and rotation number $-r$. 
\end{proof}

\begin{proof}[Proof of Proposition~\ref{genus1}]
We first recall a family of contact structures on torus bundles. Let $Y$ be a torus bundle over $S^1$. For such a manifold, there is a diffeomorphism $\psi:T^{2}\to T^{2}$ such that $Y$ is the mapping torus of $\psi$, that is $Y$ is $T^{2}\times[0,1]$ after $(x,1)$ is identified with $(\psi(x),0)$. We now consider $\alpha=f(t)\, d\theta+ g(t)\, d\phi$ on $T^2\times [0,1]$ where $(\theta,\phi)$ are angular coordinates on $T^2$ and $t$ is the coordinate on $[0,1]$. The $1$-form $\alpha$ defines a contact structure if and only if $f'g-g'f>0$. This requirement is the same as saying the curve $t\mapsto (f(t),g(t))$ moves clockwise around the origin in $\R^2$. Note that the contact structure is tangent to the $[0,1]$-direction and induces a linear foliation on the tori $T^2\times\{t\}$ of slope $f(t)/g(t)$. 

We can suppose that $f(0)=0$ and $g(0)=1$, so the characteristic foliation $L_0$ on $T^2\times \{0\}$ is by lines of slope $0$. Let $L_1$ be the linear foliation on $T^2\times\{1\}$ given by $\psi^{-1}(L_0)$. Now choose $f_1$ and $g_1$ so that they parameterize a curve on $\R^2$ that starts at $(0,1)$ and moves clockwise until it reaches the angle of the foliation $L_1$. With this choice, we obtain a contact structure $\xi'_1$ on $Y$. If we let $f_{n+1}$ and $g_{n+1}$ be $f_1$ and $g_1$ extend by rotating around the origin in $\R^2$ $2\pi n$ extra times we obtain a contact form $\alpha_{n+1}=f_{n+1}(t)\, d\theta + g_{n+1}(t)\, d\phi$ and the contact structure $\xi'_{n+1}$ on $Y$. 

Now suppose, that $M$ has an open book decomposition with binding $K$ and page genus~$1$. Let $Y$ be the result of $0$-surgery on $K$. Note $Y$ is a torus bundle over $S^1$. Let $K'$ be the surgery dual of $K$ in $Y$. The meridian for $K$ will give a longitude $l$ for $K'$. We can assume that the framing of $K'$ given by $l$ is the same as the product framing on $K'$ coming from $T^2\times[0,1]$. With this choice, we notice that $K'$ in $(Y,\xi'_n)$ can be realized by a Legendrian knot $L'_n$ with contact twisting $-n$. Let $N'_n$ be a standard neighborhood of $L'_n$ in $(Y,\xi'_n)$ and let $\xi^\pm_n$ be the contact structure obtained from $(Y,\xi'_n)$ by removing $N'_n$ and gluing in a solid torus $S_n^\pm$ with meridional slope $\infty$ (using the longitude-meridian coordinates on $K$). We note that $\partial S_n^\pm$ will be convex with dividing slope $1/n$. Thus when $n>1$ we see that there are two possible contact structures on $S_n^\pm$, one is denoted as $S_n^+$ and the other $S_n^-$. On the other hand, for $n=1$ there is a unique contact structure on the solid torus so $S_1^\pm$ is the same independent of the sign $\pm$, but we keep the sign just to be consistent with the other cases. 

\noindent{\bf Claim:} (1) the contact structure $\xi_n^\pm$ is $\xi_K$ for all $n$ and $\pm$, and that (2) the $S_n^\pm$ are non-thickenable tori in $(M,\xi_K)$. 

The result clearly follows from these two claims. 

To establish the first claim we notice that $\xi_n^\pm$ is obtained from $\xi_n'$ by contact $(+n)$-surgery on $L_n'$. If we let $T_n'$ be the transverse push-off of $L_n'$ then contact $(+n)$-surgery on $L_n'$ is the same as inadmissible transverse $0$-surgery on $T_n'$. Recall this means one takes a standard neighborhood of $T_n'$ and removes it and glue in a new solid torus with meridional slope $0$ (using longitude-meridian coordinates on $T_n'$ where the longitude is the meridian of $K$ and the meridian is the meridian of $T_n'$, this is the same as the $\infty$ slope using longitude-meridian coordinates coming from $K$). 

We now give an alternate description of $\xi'_n$. Consider 
\[
\alpha_{n,s,r}=s\, dt + r(f_n(t)\, d\theta + g_n(t)\, d\phi)
\]
where the functions $f_n$ and $g_n$ are defined above. If $s=0$ and $r=1$ then $\alpha_{n,s,r}=\alpha_n$. For $s\in[0,1]$ the forms $\alpha_{n,s,r}$ are all contact, and hence $\xi_n'$ is isotopic to the kernel of $\alpha_{n,1,r}$. Similarly $\alpha_{n,1,r}$ are contact forms for all $r>0$. So we may further isotope $\xi_n'$ to the kernel of $\xi_n''=\alpha_{n,1,\epsilon}$ for $\epsilon>0$ arbitrarily small. We note that $\mathcal{F}=\ker \alpha_{n,1,0}$ defines the foliation of $Y$ by fibers of the fibration of $Y$ over $S^1$. So $\ker \alpha_{n,1,r}$ shows that $\xi_n'$ is a deformation of the foliation $\mathcal{F}$. Now $K'$ can be thought of as a section of the fibration $Y\to S^1$ and as such it is transverse to $\mathcal{F}$ and hence to $\ker \alpha_{n,1,\epsilon}$. It is not hard to see that this transverse knot is transversely isotopic to $T_n'$. (Indeed notice that $T_n'$ is transverse to $\xi_n'$ and the deformations to $\ker_{n,1,\epsilon}$ always keep $T_n'$ transverse to the contact planes.) Thus $\xi_n^\pm$ is obtained from $\xi_n''$ by inadmissible transverse $0$-surgery on $T_n'$.

Let $N_n'$ be a standard neighborhood of $T_n'$ in $\xi_n''$. So $(M,\xi_n^\pm)$ is obtained from $(Y,\xi_n'')$ by removing $N_n'$ and gluing in a solid torus $\widetilde S_n^\pm$ with meridional slope $\infty$ (when measured in the longitude-meridian coordinates coming from $K$ in $M$). 
We can construct a Heegaard surface $\Sigma$ for $M$ by taking two fibers $F$ and $F'$ of the fibration of $(Y-N'_n)\to S^1$ and connecting them by an annulus in $\widetilde S_n^\pm$. All of the singularities in the characteristic foliation of $F$ are positive (since $\xi_n''$ is a small perturbation of $\mathcal{F}$) and similarly for $F'$. But to build the Heegaard surface $\Sigma$ one must reverse the orientation on $F'$. Thus the singularities in $F'\subset \Sigma$ are negative and in $F\subset \Sigma$ are positive. Moreover, the characteristic foliation on the annulus in $\Sigma$ consists of leaves running from one boundary component to the other. Thus Giroux \cite{Grioux93} tells us the dividing set on $\Sigma$ is simply $K$ sitting on $\Sigma$. We note that $\Sigma$ breaks $M$ into two handlebodies and each of these handlebodies is a subset of $(Y,\xi_n'')$ and hence is tight (since $\xi_n''$ is a perturbation of the taut foliation $\mathcal{F}$, it must be tight \cite{EliashbergThurston98}). Now since $\Sigma$ is also a union of two pages of the open book of $M$ associated to $K$, we know by \cite{Torisu00} that $\xi_n^\pm$ is supported by $K$ and hence isotopic to $\xi_K$ as claimed. 

We now prove the second claim above that the $S_n^\pm$ are non-thickenable tori in $(M,\xi_K)$. Suppose that there was a solid torus $S$ containing $S_n^\pm$ in $\xi_K$. We can then surger $S_n^\pm$ in $S$ to get back to $Y$ and the contact structure $\xi_n'$. The torus $S$ will become a torus $S'$ in $Y$ with convex boundary having dividing slope larger than $-n$. We will show that this will allow us to find another solid torus with convex boundary having dividing slope $-n+1$. This will be a neighborhood of a Legendrian knot $L$ with $\tb=-n+1$ that is smoothly isotopic to $L_n'$. But since the Giroux torsion of $\xi_n'$ is $n-1$ we know from \cite{HondaKazezMatic02} that the contact twisting of Legendrian knots isotopic to $L_n'$ is bounded above by $-n$. This contradiction proves that $S$ did not exist and $S_n^\pm$ is non-thickenable. 

We are left to show that given $S'$ in $(Y,\xi_n')$ we can find a solid torus in the same knot type with convex boundary having dividing slope $-n+1$. If the dividing slope of $S'$ is larger than $-n+1$ then we can find the desired torus inside of $S'$. So, we are left to consider the case when the dividing slope of $\partial S'$ is between $-n$ and $-n+1$. In this case, we claim there is a Legendrian $L'$ knot isotopic to $L'_n$ outside of $S'$. Given this, we note that we can take an annulus $A$ with one boundary component on $L'$ and the other a ruling curve on $\partial S'$. We can make the annulus convex and the dividing curves on $A$ will intersect $L'$ exactly $2n$ times and will intersect the dividing curves on $\partial S'$ more times (since the curves of slope $0$ will intersect any curve of slope between $-n$ and $-n+1$ more than $n$ times). Thus we can find a bypass on $A$ for $\partial S'$. Attaching the bypass will increase the dividing slope. We can continue to do this until we have a solid torus with boundary having dividing slope $-n+1$. 

We now consider the existence of $L'$. We do not actually prove $L'$ exists in $Y$, but that we can find a finite cover of $Y$ that unwraps the fibers of $Y\to S^1$ in which we can find $L'$, which is disjoint from a lift of $S'$. This is clearly sufficient to obtain the same contradiction as in the previous paragraph. We note that we consider the cover of $Y$ in which the fiber is unwrapped to $\R^2$, then there will be lifts of $L_n'$ and $S'$ that will be disjoint, but now there is clearly a finite cover in which they are disjoint too. 
\end{proof}

We now turn to the case of an open book with trivial monodromy. 
\begin{proposition}
Let $B$ be the binding of the open book for $\#_{2g} S^1\times S^2$ with page the surface $\Sigma$ of genus $g$ with one boundary component and monodromy $\phi$, the identity map on $\Sigma$. Then in the tight contact structure $\xi_{std}$ on $\#_{2g} S^1\times S^2$, $B$ admits non-thickenable tori $S_n^\pm$ for $n>1$ and $S_1$, where $\partial S_n^\pm$ is convex with $2\gcd(2g-1,n)$ dividing curves of slope $(2g-1)/n$ and $\partial S_1$ is convex with $2$ dividing curves of slope $2g-1$. Moreover, these are the only non-tickenable tori in the knot type of $B$. 
\end{proposition}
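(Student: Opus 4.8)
The plan is to run the argument of Proposition~\ref{genus1} with the higher-genus page, and then to add a classification step that upgrades the existence statement to the ``only'' statement. First I would pass to $Y$, the result of $0$-surgery (with respect to the page framing) on $B$. Since the monodromy is the identity, capping off the page realizes $Y$ as the trivial bundle $\Sigma_g\times S^1$, and the surgery dual $K'$ is a section $\{p\}\times S^1$, transverse to the fibers $\Sigma_g\times\{\theta\}$, whose product framing is the meridian $\mu_B$. As in the genus-$1$ case I would build contact structures $\xi_n'$ on $Y$ by perturbing the taut foliation $\mathcal{F}$ by fibers, choosing the perturbation so that the contact planes rotate $n$ times relative to $\mathcal{F}$ near $K'$; then $\xi_n'$ is a deformation of $\mathcal{F}$, hence tight by \cite{EliashbergThurston98}, it has Giroux torsion $n-1$, and $K'$ is realized by a Legendrian knot $L_n'$ of twisting $-n$ relative to the product framing. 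Removing a standard neighborhood of $L_n'$ and regluing a solid torus with meridian $\infty$ (undoing the surgery) defines $(M,\xi_n^\pm)$ together with the solid tori $S_n^\pm$ (and $S_1$).

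Tracking framings then yields the stated boundary data: the self-linking $2g-1=-\chi(\Sigma)$ of the transverse binding and the $n$ twists of $\xi_n'$ combine to place the dividing slope of $\partial S_n^\pm$ at $(2g-1)/n$ in the longitude-meridian coordinates of $B$, while the common factor $\gcd(2g-1,n)$ carried by the torsion layer is exactly what forces $2\gcd(2g-1,n)$ parallel dividing curves (and two curves of slope $2g-1$ when $n=1$). The two claims of Proposition~\ref{genus1} transfer essentially verbatim: that $\xi_n^\pm$ is isotopic to $\xi_{std}$ follows by forming a Heegaard surface from two pages joined by an annulus in the reglued torus, checking with Giroux \cite{Grioux93} that its dividing set is $B$, and invoking \cite{Torisu00}; and non-thickenability follows because thickening $S_n^\pm$ would produce, back in $(Y,\xi_n')$, a Legendrian in the class of $L_n'$ with twisting $-n+1$, contradicting the bound of \cite{HondaKazezMatic02} imposed by the Giroux torsion $n-1$.

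The genuinely new content is the word ``only''. Here I would exploit that, because the monodromy is trivial, the binding complement $M\setminus\nu B$ is the product $\Sigma_{g,1}\times S^1$. Any solid torus $S$ in the knot type of $B$ is the complement of a tight contact structure on $\Sigma_{g,1}\times S^1$ with convex boundary, and such contact structures are controlled by the convex decomposition and gluing theory of Honda-Kazez-Matic together with the boundary dividing data. From this I would read off the possible dividing slopes and torsion on $\partial S$, show that any $S$ whose complementary structure carries less than the extremal torsion at its slope can be enlarged (producing a bypass exactly as in the last paragraphs of the proof of Proposition~\ref{genus1}, via a convex annulus from a ruling curve to a copy of $L'$), and conclude that the solid tori admitting no thickening are precisely those realizing the extremal torsion at each slope, namely the two structures $S_n^\pm$ for each $n>1$ and the torsion-free $S_1$.

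I expect this classification step to be the main obstacle: in contrast to the existence part it requires a complete understanding of the tight contact structures on $\Sigma_{g,1}\times S^1$ and of how Giroux torsion is distributed, and one must rule out any exotic solid torus—one with a non-standard slope, or whose boundary carries extra dividing curves—escaping the list. A secondary point needing care is the count $2\gcd(2g-1,n)$: one must verify that the torsion layer cannot be simplified to fewer dividing curves without thickening, and it is precisely here that the product structure and the identity monodromy are used in an essential way.
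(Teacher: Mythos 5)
Your plan has a genuine gap, and it sits exactly where you predicted: the ``only'' statement. You propose to obtain it from a classification of tight contact structures on the binding complement $\Sigma_{g,1}\times S^1$ via convex decomposition theory, but no such classification exists (Honda's results cover circle bundles over \emph{closed} surfaces and torus bundles, not products with boundary and arbitrary convex boundary data), and the paper never uses one. Instead, the paper's Step~1 runs a direct argument inside the complement $M=\Sigma_{g,1}\times S^1$ of an \emph{arbitrary} non-thickenable torus $S$: take a Legendrian realization $L$ of an $S^1$-fiber with maximal contact framing $-k$; convex annuli $\alpha_i\times S^1$ over the cores of a handle decomposition of the page thicken $N(L)$ to a submanifold $M'$ whose complement is a solid torus of slope $(2g-1)/k$ containing $S$, which forces the slope of $S$; then cutting $M$ along vertical annuli $\beta_i\times S^1$ and rounding corners, combined with the fact that $N(L)$ itself cannot thicken (an annulus/bypass argument using the maximality of $k$ and the non-thickenability of $S$), pins the number of dividing curves to exactly $2\gcd(2g-1,k)$. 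This same mechanism gives Step~3 (non-thickenability of the constructed tori), so the slope determination, the dividing-curve count, and the ``only'' claim all come from one argument that your proposal does not supply.

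There is also a problem with transferring the genus-$1$ existence and non-thickenability proofs verbatim. That proof rests on two ingredients specific to torus fibers: the explicit $T^2$-invariant contact forms $f(t)\,d\theta+g(t)\,d\phi$ on the torus bundle (which let one both realize $L_n'$ with twisting $-n$ and exhibit $\xi_n'$ as a deformation of the fibration foliation), and the torsion-to-twisting bound of \cite{HondaKazezMatic02}, which bounds the twisting of Legendrian knots isotopic to $L_n'$ in terms of the Giroux torsion along the \emph{fiber} tori. For $g\geq 2$ the fibers of $\Sigma_g\times S^1\to S^1$ are not tori, there is no analogue of these invariant forms, and the torsion of vertical tori $\gamma\times S^1$ does not by any cited result bound the twisting of the section $K'$; so ``rotate $n$ times near $K'$ while remaining a deformation of $\mathcal{F}$'' is not a construction you can carry out as stated. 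The paper sidesteps this by switching to the circle-bundle picture: it builds the contact structure directly on $M$ as $N(L)$ union $I$-invariant neighborhoods of the annuli $A_i$ (so that all $S^1$-fibers are Legendrian with twisting $-k$), fills in $S$ with the universally tight solid tori, identifies the result with $\xi_{std}$ by the Heegaard-surface/\cite{Torisu00} argument as in Proposition~\ref{genus1}, and gets the crucial twisting bound from Lemma~3.3 of \cite{Honda00b} rather than from Giroux torsion.
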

\begin{remark}
We notice that $\xi_{std}$ on $\#_{2g} S^1\times S^2$ is supported by the open book in the proposition. 
\end{remark}
\begin{proof}
We will prove the proposition in three steps. We will first see that the only possible slopes for non-thickenable tori in the knot type of $B$ are the ones given in the proposition, we will then construct these solid tori in $\xi_{std}$, and we will finally show they are actually non-thickenable. 

\smallskip
\noindent{\bf Step 1:} Identifying the possible non-thickenable slopes. Suppose $S$ is a non-thickenable solid torus with convex boundary in the knot type $B$. There are integers $r$ and $s$ such that the boundary of $S$ has $2\gcd(r,s)$ dividing curves of slope $r/s$. (Here we use standard longitude-meridian coordinates on $\partial S$.) Now consider $M=(\#_{2g} S^1\times S^2)\setminus S$. Clearly, $M=\Sigma\times S^1$ where the $S^1$ factor is parallel to the meridian of $S$. 

We can give a framing to fibers of $M$ using the product structure on $M$. Thus, we can assign integers to the contact framing on any Legendrian knot isotopic to an $S^1$-fiber. Let $L$ be a Legendrian realization of an $S^1$-fiber on the interior of $M$ with the largest possible contact framing. We note that the contact framing of $L$ must be less than or equal to $-1$ because if it were $0$ (or if larger, we could stabilize to make it $0$), then we could thicken $S$ to a solid torus with dividing slope $\infty$ and thus we would see that $\xi_{std}$ is overtwisted which it is not. Let $-k$ be the contact framing on $L$ for some $k\geq 1$. Let $N(L)$ be a standard neighborhood of $L$, and we can choose the product structure on $M$ so that $N(L)$ is a disk $D$ on the interior of $\Sigma$ times $S^1$. Now we can think of $\Sigma$ as being built from $D$ by attaching $2g$ $1$-handles. Let $\alpha_1, \ldots, \alpha_{2g}$ be the cores of these $1$-handles. That is, the $\alpha_i$ are disjoint arcs on $\Sigma$ with endpoints on $\partial D$ and a neighborhood of $D$, and the $\alpha_i$ is $\Sigma$ minus a collar neighborhood of $\partial \Sigma$. Let $A_i=\alpha_i\times S^1$. These are annuli in the interior of $M$ with boundary on $N(L)$. We can assume that $\partial A_i$ are ruling curves on $\partial N(L)$ and that the $A_i$ are convex. We notice that the dividing curves on each $A_i$ all run from one boundary component to the other because if not, we would have a bypass for $\partial N(L)$ that would show that $L$ destabilizes. Now if we add $I$-invariant neighborhoods of the $A_i$ to $N(L)$ and round the corners, we get a manifold $M'$ such that $M-M'$ is a thickened torus, and since the dividing curves on $\partial N(L)$ have slope $-k$ we see that the dividing curves on $\partial M'$ have $2\gcd (2g-1,k)$ dividing curves of slope $-k/(1-2g)$. The longitude-meridian coordinates on $S$ are the inverse of those on $\partial M'$ coming from $N(L)$ so we see that the complement of $M'$ is a solid torus $S'$ containing $S$ with dividing slope $(2g-1)/k$. Thus, since $S$ is non-thickenable we must have that $r/s=(2g-1)/k$ and $\partial S$ must have less than or equal to $2\gcd(2g-1,k)$ dividing curves. (Recall in the definition of non-thickenable, we said that any torus containing $S$ must have the same slope as $S$ but there could be more dividing curves as one can always ``fold" a convex surface in an $I$-invariant neighborhood to increase the number of dividing curves.)

We are left to see that $\partial S$ has $2\gcd (2g-1, k)$ dividing curves. From above, we just need to see that it does not have fewer dividing curves. Suppose that $l=\gcd(2g-1,k)$ and $\partial S$ has $2l'$ dividing curves with $l'<l$. (Here we assume that $l>1$ since if not, there is nothing to prove.)

Let $\beta_1, \ldots, \beta_{2g}$ be arcs properly embedded in $\Sigma$ that cut $\Sigma$ into a disk that contains $D$ and let $B_i=\beta_i\times S^1$. Thus, $M$ cut along the $B_i$ is a solid torus $N$ that contains  $N(L)$. We can arrange that $\partial M$ has ruling curves of slope $\infty$, that is, parallel to the $S^1$-factor, and then choose the $B_i$ to have boundary ruling curves on $\partial M$ and be convex. We note that the dividing curves on each $B_i$ must run from one boundary component of $B_i$ to the other since, otherwise, there would be a bypass for $S$ that we could use to thicken $S$. If we cut $M$ along the $B_i$ and round the corners, we see that $\partial N$ is convex, and we compute the slope of the dividing curves as follows. Suppose $a$ and $b$ are positive, relatively prime integers such that $a/b=k/(2g-1)$. Thus $k=la$ and $2g-1=lb$. Now the homology class of a dividing curve on $\partial M$ is $(l' a, l'b)$. Thus, the homology class of a dividing curve on $\partial N$ is $(l'a, l'b-2g)$. Recalling the slope conventions above, we see that the dividing curves on $\partial N$ have slope $-\frac{l'a}{2g-l'b}$. Notice that this function is decreasing from 0 to $l$, and thus $-\frac{l'a}{2g-l'b}>-\frac{la}{2g-lb}=-k$. So $N$ is a thickening of $N(L)$. Below we will show that $N(L)$ cannot be thickened in $M$ and thus $l'$ cannot be less than $l$ and we will complete Step 1. 

Consider an annulus $A$ in M with one boundary component a ruling curve on $\partial M$ and the other on $\partial N(L)$. Notice that there are no boundary parallel dividing curves on $A$ since they would give a bypass for either $\partial M$ or $\partial N(L)$, and these are not allowed, as the former would thicken $S$ (which we are assuming does not thicken) while the latter would destabilize $L$ (which we are assuming does not destabilize). Thus $\partial A$ intersects the dividing curves on $\partial M$ $2k$ times. Now, if $N(L)$ thickened to a solid torus $N'$ with dividing slope $s$ we first note that $s$ must be between $-k$ and $-k+1$ since $L$ does not destabilize. We also note that any curve of slope between $-k$ and $-k+1$ intersects the boundary of $A$ (which has slope $\infty)$ more times than the slope $-k$ curve does. Thus, if $A$ is an annulus between $\partial M$ and $\partial N'$, then there will be a bypass for $\partial N'$ on $A$. Thus, we can thicken $N'$. We can keep doing this until we have thickened $N'$ to a solid torus with dividing slope $-k+1$, but this contradicts the fact that $L$ has the maximal contact twisting among curves isotopic to the fiber. Thus $N(L)$ does not thicken and Step 1 is complete. 

\smallskip
\noindent{\bf Step 2:} We will build a contact structure on $M$ by defining it on $N(L)$ and neighborhoods of the $A_i$ above, and we will then extend this contact structure over $S$ and show that the constructed contact structure is indeed $\xi_{std}$. 

Since $N(L)$ is a neighborhood of a Legendrian knot, there is a unique choice for the contact structure on $N(L)$ (once the characteristic foliation on $\partial N(L)$ is fixed). By considering a standard model for a Legendrian knot, we can see that $N(L)$ is fibered by Legendrian $S^1$s isotopic to $L$. Consider a neighborhood $N(A_i)$ of $A_i$. Notice that $N(A_i)\cap N(L)$ consists of two annuli, call one of them $C_i$. Smoothly, one may think of $N(A_i)$ as $C_i\times I$ where $I$ is an interval. The annulus $C_i$ may be taken to be foliated by ruling curves of $\partial N(L)$ and then we may extend the contact structure over $N(A_i)$ as an $I$-invariant contact structure $C_i\times I$. This defines a contact structure on $M$ for which all $S^1$-fibers are Legendrian and isotopic to $L$. The proof of Lemma~3.3 in \cite{Honda00b} shows that any Legendrian knot smoothly isotopic to an $S^1$-fiber has contact twisting less than or equal to $-k$. In the proof of Theorem~3.8 in \cite{Honda00b}, Honda shows how to perturb the fibration so that the fibers are all transverse to the contact structure. 

Given the boundary conditions on $\partial M=\partial S$ there are two universally tight contact structures on $S$ with this data (except when $k=1$, in which case there is one). We extend the contact structure constructed above over $S$ by either of these two universally tight contact structures. This gives a contact structure on $\#_{2g} S^1\times S^2$. We note that the same argument as the one used in the proof of Proposition~\ref{genus1} above, whose that the constructed contact structure is $\xi_{std}$. We denote the constructed solid tori by $S_k^\pm$ (when $k=1$, we omit the $\pm$). 

\smallskip
\noindent{\bf Step 3:} We are left to show that the $S_k^\pm$ are non-thickenable tori. To this end, notice that if $S_k^\pm$ thickened, then the argument at the end of Step~1 would show that $N(L)$ thickens in $M$, but we saw in Step~1 that $N(L)$ does not thicken.
\end{proof}

\begin{proof}[Proof of Theorem~\ref{non-thickenableex}]
The theorem clearly follows from the two propositions in this section which deal with the two cases in the statement of the theorem. 
\end{proof}

\bibliography{references}
\bibliographystyle{plain}
\end{document}